\newtheorem{thm}{Theorem}[section]
\newtheorem{cor}[thm]{Corollary}
\newtheorem{prop}[thm]{Proposition}
\theoremstyle{definition}
\newtheorem{defn}[thm]{Definition}
\newtheorem{rem}[thm]{Remark}
\numberwithin{equation}{section}
\newcommand{\N}{\mathbb{N}}
\newcommand{\ep}{\epsilon}
\newcommand{\ra}{\rightarrow}
\def \B {\mathcal B}
\numberwithin{equation}{section}
\newcommand{\dd}{\mathop{}\!\mathrm{d}}
\begin{document}

\title{Measure-theoretic mean equicontinuity and bounded complexity}
\author{Tao Yu}
\address[T. Yu]{Shanghai center for mathematical sciences, Fudan University, Shanghai, 200433, P.R. China}
\email{ytnuo@mail.ustc.edu.cn}

\begin{abstract}
Let $(X,\B,\mu,T)$ be a measure preserving system.
We say that a function $f\in L^2(X,\mu)$ is $\mu$-mean equicontinuous
if for any $\ep>0$ there is $k\in \N$ and measurable sets ${A_1,A_2,\cdots,A_k}$
with $\mu\left(\bigcup\limits_{i=1}^k A_i\right)>1-\ep$ such that whenever $x,y\in A_i$ for some $1\leq i\leq k$, one has
\[ \limsup_{n\to\infty}\frac{1}{n}\sum_{j=0}^{n-1}|f(T^jx)-f(T^jy)|<\ep.
\]
Measure complexity with respect to $f$ is also introduced.
It is shown that $f$ is an almost periodic function
 if and only if $f$ is $\mu$-mean equicontinuous if and only if $\mu$ has bounded complexity with respect to $f$.

Ferenczi studied measure-theoretic complexity
using $\alpha$-names of a partition and the Hamming distance. He proved that
if a  measure preserving system  is ergodic, then the complexity function is bounded if and only if the system
has discrete spectrum.
We show that this result holds without the assumption of ergodicity.
\end{abstract}
\keywords{Mean equicontinuity, bounded complexity, almost periodic function, discrete spectrum}
\subjclass[2010]{37A35}
\maketitle

\section{Introduction}
Throughout this paper,
 a topological dynamical system (t.d.s.\ for short)
is a pair $(X,T)$, where $X$ is a compact metric space with a metric $d$
and $T$ is a  homeomorphism  from $X$ to itself,
and a measure preserving system (m.p.s.\ for short)
is a quadruple $(X,\mathcal{B},\mu,T)$,
where $(X,\mathcal{B},\mu)$ is a Lebesgue space
and $T$ is an invertible measure-preserving transformation from $X$ to itself.
There is a natural connection between topological dynamical systems
and measure preserving systems.
If $(X,T)$ is a t.d.s., then a probability measure $\mu$ on $(X,\mathcal{B}_X)$
is $T$-invariant if and only if $(X,\mathcal{B}_X,\mu,T)$ is a m.p.s.,
where $\mathcal{B}_X$ is the Borel $\sigma$-algebra of $X$.

A t.d.s.\ $(X,T)$ is called equicontinuous
if for any $\varepsilon>0$ there exists $\delta>0$
such that $d(T^nx,T^ny)<\varepsilon$ for any $n\geq 0$ whenever $d(x,y)<\delta$.
The analogous concept of equicontinuity for m.p.s.\
is the discrete spectrum property.
In the study of discrete spectrum, Fomin introduced the concept
of  stability in the mean in the sense of Lyapunov
(mean-L-stability for short) in \cite{F51}.
It was shown in \cite{LTY} that mean-L-stability
is equivalent to mean equicontinuity.
In \cite{LTY}, Li, Tu and Ye showed that
every ergodic measure in a mean equicontinuous system
has discrete spectrum.

In \cite{HLY}, Huang, Lu and Ye studied equicontinuity
for invariant measures of t.d.s..  They showed that for an invariant measure $\mu$ on a t.d.s.\ $(X,T)$, if $(X,T,\mu)$ is $\mu$-equicontinuous
then $\mu$ has discrete spectrum.
Following this idea, in \cite{Felipe} Garc\'ia-Ramos introduced
mean equicontinuity
for an ergodic invariant measure $\mu$ of a t.d.s.\ $(X,T)$
and showed that $(X,T,\mu)$ is $\mu$-mean equicontinuous if and only if it has discrete spectrum,
see also \cite{L16} for another proof.
It should be noticed that recently the authors in \cite{HLTXY,HWY}
showed that for an invariant measure $\mu$ on a t.d.s.\ $(X,T)$,  $(X,T,\mu)$ is $\mu$-mean equicontinuous if and only if it has discrete spectrum.

Complexity function is very useful to describe equicontinuity.
In \cite{BHM}, Blanchard et al.\ studied topological complexity via the complexity function of an open cover and
showed that the complexity function is bounded for any open cover if and only if the system is equicontinuous.
In \cite{HLTXY,HWY}, Huang et al.\ studied topological and measure-theoretic complexity via a sequence of metrics induced by a metric and
showed that an invariant  measure $\mu$ on $(X,T)$ has bounded  complexity
with respect to  $\bar{d}_n$ if and only if $(X,T)$ is $\mu$-mean equicontinuous if and only if it has discrete spectrum,
where $\bar d_n(x,y)=\frac{1}{n}\sum_{i=0}^{n-1}d(T^ix,T^iy)$.

Ferenczi \cite{Fer} studied  measure-theoretic complexity of
a m.p.s $(X,\B,\mu,T)$ using $\alpha$-names of a partition and the Hamming distance. He proved that if a m.p.s  is ergodic
then the complexity function is bounded
if and only if the system has discrete spectrum. In this paper, we will show  that for an invariant measure $\mu$ on $(X,T)$,
$(X,T)$ is $\mu$-mean equicontinuous if and only if the complexity of
$(X,\B_X,\mu,T)$ using $\alpha$-names of a partition and the Hamming distance is bounded.
 Huang et al. \cite{HLTXY} proved that $(X,T)$ is $\mu$-mean equicontinuous if and only if it has discrete spectrum,
we know Ferenczi's result also holds for invariant measure.

Following the idea in \cite{F81},
Garc\'ia-Ramos and Marcus introduced measure-theoretic mean equicontinuity with respect to a function \cite{GM15}.
Let $(X,T)$ be a t.d.s. with metric $d$ and $\mu$ be a $T$-invariant measure on $X$.
We say that
a function $f\in L^1(X,\mu)$ is $\mu$-mean equicontinuous
if for every $\tau>0$
there exists a compact subset $K$ of $X$ with $\mu(K)>1-\tau$ such that  for any $\varepsilon>0$ there exists $\delta>0$
such that
\[\limsup_{n\to\infty} \frac{1}{n}\sum_{i=0}^{n-1}|f(T^ix)-f(T^iy)|<\varepsilon\]
whenever $d(x,y)<\delta$ with $x,y\in K$.
It was shown in \cite{GM15} that for an ergodic measure $\mu$ on $(X,T)$, $f\in L^2(X,\mu)$ is $\mu$-mean equicontinuous if and only if $f$ is an  almost periodic function.

In this paper, we use complexity function to describe measure-theoretic mean equicontinuity with respect to a function.
For a function $f\in L^1(X,\mu)$, we define a sequence of pseudo-metrics $\bar f_n(x,y)=\frac{1}{n}\sum_{i=0}^{n-1}|f(T^ix)-f(T^iy)|$.
We consider the measure complexity with respect to $\{\bar f_n\}$.
 It is shown that for an invariant measure $\mu$ on $(X,T)$ and $f\in L^2(X,\mu)$,
  $\mu$ has bounded complexity with respect to $\{\bar f_n\}$
if and only if $f$ is $\mu$-mean equicontinuous if and only if
 $f$ is an  almost periodic function.

We show that these results also hold for a m.p.s $(X,\B,\mu,T)$.
The only difficulty to overcome is that the definition of $\mu$-mean equicontinuity of a function
 relies on a given metric on $X$,
even though it does not depend on the metric.
We will give a purely measure-theoretical definition of $\mu$-mean equicontinuity of a function
which is equal to the ordinary one in t.d.s.,
which answers the question in \cite{GM15}.
Let $(X,\B,\mu,T)$ be a m.p.s..
We say that a function $f\in L^1(X,\mu)$ is $\mu$-mean equicontinuous in the measure-theoretic sense
if for any $\ep>0$ there is $k\in \N$ and  measurable sets ${A_1,A_2,\cdots,A_k}$
with $\mu\left(\bigcup\limits_{i=1}^k A_i\right)>1-\ep$ such that whenever $x,y\in A_i$ for some $1\leq i\leq k$, one has
\[ \limsup_{n\to\infty}\frac{1}{n}\sum_{i=0}^{n-1}|f(T^ix)-f(T^iy)|<\ep.
\]

The structure of the paper is the following.  In Section 2, we recall some basic notions which we will use in the paper.
In Section 3, we show that for a m.p.s. $(X,\B,\mu,T)$, complexity function using $\alpha$-names of a partition and the Hamming distance
is bounded if and only if the system has discrete spectrum,
which generalize Ferenczi's result without the assumption of ergodicity.
In Section 4, we give a purely measure-theoretic definition of the measurable mean equicontinuity of a function,
and show that it is equal to the ordinary one in t.d.s..
We will also prove for a t.d.s.\ $(X,T)$ with an invariant measure $\mu$,
a function $f\in L^1(X,\mu)$ is $\mu$-mean equicontinuous if and only if $\mu$ has bounded complexity with respect to $\{\bar f_n\}$.
In Section 5, we show that $f\in L^2(X,\mu)$ is $\mu$-mean equicontinuous if and only if $f$ is an almost periodic function.

\medskip
\noindent {\bf Acknowledgments.}
I would like to thank Professors Xiangdong Ye, Jian Li and Guohua Zhang for useful suggestions.

\section{Preliminaries}

In this section we recall some notions and aspects of dynamical systems.
Let $A$ be a finite subset of a topological space. Denote by $\#(A)$ the number of elements of $A$.

Let $F$ be a subset of $\mathbb{Z}_+$. Define the \emph{upper density $\overline{D}(F)$ of $F$} and \emph{lower density $\underline{D}(F)$ of $F$} by
\[\overline{D}(F)=\limsup_{N\to\infty} \frac {1}{N} \#(F\cap [0,N-1])\]
\[\underline{D}(F)=\liminf_{N\to\infty} \frac {1}{N} \#(F\cap [0,N-1]).\]

\subsection{$\mu$-mean equicontinuity}
Let $(X,T)$ be a t.d.s. with a metric $d$.
Denote the collection of all invariant measures on $(X,T)$ by $M(X,T)$, and all ergodic measures on $(X,T)$ by $M^e(X,T)$.
We say that a subset $K$ of $X$ is a \emph{mean equicontinuous set}
if for every $\ep>0$,
there exists a $\delta>0$ such that for
all $x,y\in K$ with $d(x,y)<\delta$ we have \[\underset{n\to\infty}{\limsup}\frac{1}{n}\sum_{i=0}^{n-1} d(T^ix,T^iy)<\ep.\]
Let $\mu\in M(X,T)$. We say that $(X,T)$ is \emph{$\mu$-mean equicontinuous} if for every $\tau>0$
there exists a mean equicontinuous set $K$ of $X$ with $\mu(K)>1-\tau$.
Let $f\in L^1(X,\mu)$.
We say that a subset $K$ of $X$ is $f$-\emph{mean equicontinuous set}
if for every $\ep>0$,
there exists a $\delta>0$ such that \[\underset{n\to\infty}{\limsup}\frac{1}{n}\sum_{i=0}^{n-1} |f(T^ix)-f(T^iy)|<\ep\] for
all $x,y\in K$ with $d(x,y)<\delta$.
We say that $f$ is \emph{$\mu$-mean equicontinuous} if for every $\tau>0$
there exists a $f$-mean equicontinuous set $K$ of $X$ with $\mu(K)>1-\tau$.

\subsection{Lebesgue Space}
A measurable space $(X,\B)$ is called a  $\emph{Borel space}$ if it is isomorphic to $(Y,\B_Y)$ where $Y$ is a subset of the Cantor set.
A topological space $X$ is called $\emph{Polish}$ if there exists a metric on $X$ such that the metric topology coincides
with the original topology and with respect to this metric $X$ is complete and second countable.
A Borel space $(X,\B)$ is called a  $\emph{standard Borel space}$ if it is isomorphic to $(Y,\B_Y)$ where $Y$ is a Polish space.
We call a measure space $(X,\B,\mu)$ a $\emph{Lebesgue space}$ if $(X,\B)$ is a standard Borel space and $\mu$ a regular probability measure on $\B$.

\begin{thm}\label{0625}
If $(X,\B,\mu,T)$ is a m.p.s. where $(X,\mathcal{B},\mu)$ is a Lebesgue space, then there is a t.d.s. $(Y,S)$ such that $\pi:(Y,\B_Y,\nu,S)\ra (X,\B,\mu,T)$ is measure-theoretically isomorphic, where $\nu$ is an $S$-invariant Borel probability measure and $\B_Y$ is the $\sigma$-algebra
consisting of Borel subsets of $Y$.
\end{thm}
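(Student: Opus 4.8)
The plan is to build an explicit topological model by embedding $X$ into a compact cube and transporting $T$ to a coordinate shift. Since $(X,\B,\mu)$ is a Lebesgue space, $(X,\B)$ is standard Borel, so I may fix a countable family $\{g_m\}_{m\geq 1}$ of measurable functions with $|g_m|\leq 1$ that generates $\B$ modulo $\mu$ and separates the points of some conull Borel set $X_0$. Indexing by the countable set $I=\N\times\Z$, I would define the measurable map
\[
\Phi\colon X\ra[-1,1]^{I},\qquad \Phi(x)=\bigl(g_m(T^kx)\bigr)_{(m,k)\in I}.
\]
The point of using the extra $\Z$-coordinate is that it records the whole $T$-orbit of the values $g_m$, which is exactly what lets $T$ become a shift.

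Next I would introduce the homeomorphism $S$ of $[-1,1]^{I}$ given by $(Sz)_{(m,k)}=z_{(m,k+1)}$. A one-line computation shows the intertwining relation $\Phi\circ T=S\circ\Phi$, since $\Phi(Tx)_{(m,k)}=g_m(T^{k+1}x)=\Phi(x)_{(m,k+1)}=(S\Phi(x))_{(m,k)}$. I then push $\mu$ forward, setting $\nu=\Phi_{*}\mu$, a Borel probability measure on $[-1,1]^{I}$; combining the intertwining relation with the $T$-invariance of $\mu$ yields $S_{*}\nu=\nu$. Taking $Y=\supp(\nu)$ gives a compact (hence compact metric) subset of the metrizable cube $[-1,1]^{I}$, and invariance of $\nu$ forces $S(Y)=Y$, so $(Y,S|_Y)$ is a genuine t.d.s.\ carrying the $S$-invariant Borel probability measure $\nu$ with $\nu(Y)=1$.

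Finally I would check that $\Phi$ is a measure isomorphism onto $(Y,\nu)$. Because $\{g_m\}$ separates the points of $X_0$, the restriction $\Phi|_{X_0}$ is injective; by the Lusin--Souslin theorem an injective Borel map between standard Borel spaces has a Borel image and a Borel-measurable inverse, so $\Phi$ restricts to a Borel isomorphism from a conull subset of $X$ onto a conull (measure-one) subset of $Y$ that transports $\mu$ to $\nu$. Letting $\pi$ be the (mod-null) inverse of $\Phi$ then gives the desired measure-theoretic isomorphism $\pi\colon(Y,\B_Y,\nu,S)\ra(X,\B,\mu,T)$, with $\pi\circ S=T\circ\pi$ almost everywhere coming directly from the intertwining relation. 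The step I expect to be the main obstacle is precisely this last one: arranging that $\{g_m\}$ can be chosen to separate points on a conull set and to generate $\B$, and then verifying that $\Phi$ is injective modulo null sets and an isomorphism onto its image. This rests on the descriptive-set-theoretic structure of Lebesgue (standard Borel) spaces, whereas the intertwining identity, the $S$-invariance of $\nu$, and the compactness and invariance of $Y=\supp(\nu)$ are all routine.
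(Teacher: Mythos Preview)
The paper does not actually supply a proof of this theorem: it is stated in the Preliminaries section as a known structural fact about Lebesgue spaces and is then used as a black box in the corollaries. So there is no ``paper's own proof'' to compare against.

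Your construction is the standard topological-model argument and is correct. A few small remarks on the points you flagged as potential obstacles. The existence of a countable point-separating family $\{g_m\}$ with $|g_m|\le1$ is immediate from the definition of a standard Borel space: take a Borel isomorphism with a Borel subset of $[0,1]$ (or the Cantor set) and pull back the coordinate functions, or simply use indicators of a countable generating algebra. Separation of points on a conull set is then automatic (in fact on all of $X$ if you choose the family this way), and this is all that is needed for injectivity of $\Phi$ at $k=0$. The Lusin--Souslin step is exactly as you describe: an injective Borel map between standard Borel spaces is a Borel isomorphism onto its image, so $\Phi|_{X_0}$ gives a Borel isomorphism onto a set of full $\nu$-measure, and intersecting with $Y=\supp(\nu)$ (also of full $\nu$-measure) only discards a further null set. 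The intertwining, the $S$-invariance of $\nu$, and the compactness and $S$-invariance of $Y$ are, as you say, routine. One cosmetic point: since $I=\N\times\Z$ is countable, $[-1,1]^I$ is compact metrizable, so $(Y,S)$ is a bona fide t.d.s.\ in the sense used in the paper.
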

\subsection{Discrete spectrum and almost periodic functions}

Let $(X, \B,\mu, T )$ be an invertible m.p.s.
An {\it eigenfunction } is some non-zero
function $f\in L^2(X,\mu)$ such that $Uf = f\circ T =\lambda f$ for some $\lambda \in \mathbb{C}$.
In this case, $\lambda$ is
called the {\it eigenvalue} corresponding to $f$. It is easy to see every eigenvalue has norm one,
that is $|\lambda| = 1$. If $f\in L^2(X,\mu)$ is an eigenfunction, then $\text{cl}\{U^nf : n\in \mathbb{Z}\}$ is  compact in $L^2(X,\mu)$.
 Generally, we say that $f$ is {\it almost periodic} if $\text{cl}\{U^nf : n\in \mathbb{Z}\}$ is compact
in $L^2(X,\mu)$. It is well known that the set of all bounded almost periodic functions forms a
$U$-invariant and conjugation-invariant subalgebra of $L^2(X,\mu)$ (denoted by $A_c$). The set of all
almost periodic functions is just the closure of $A_c$ (denoted by $H_c$), and is also spanned by
the set of eigenfunctions. $(X, \B,\mu, T )$ is said  to have {\it discrete spectrum} if $L^2(X,\mu)$
is spanned by the set of eigenfunctions, that is $H_c = L^2(X,\mu)$.

\section{Complexity function and discrete spectrum}

Ferenczi \cite{Fer} studied  measure-theoretic complexity of
a m.p.s $(X,\B,\mu,T)$ using $\alpha$-names of a partition and the Hamming distance. He proved that if a m.p.s  is ergodic
then the complexity function is bounded
if and only if the system has discrete spectrum.
In this section, we show that for an invariant measure $\mu$ on $(X,T)$,
$(X,T)$ is $\mu$-mean equicontinuous if and only if the complexity of
$(X,\B_X,\mu,T)$ using $\alpha$-names of a partition and the Hamming distance is bounded.
It should be noticed that Huang et al.  \cite[Theorem 4.2, 4.3, 4.6]{HLTXY} proved that $(X,T)$ is $\mu$-mean equicontinuous if and only if it has discrete spectrum.

Let $(X,\mathcal{B},\mu,T)$ be a m.p.s. and $\alpha=\{A_1,\dotsc,A_l\}$ be a measurable partition of $X$.
For a point $x\in X$, the $\alpha$-name $\alpha(x)$ is the bi-infinite
sequence $\alpha_i(x)$ where $\alpha_i(x)=h$ whenever $T^nx \in A_h$.
For $x,y\in X$ and $n\geq 1$, let
\[H^\alpha_n(x,y)=\frac{1}{n}\#\{0\leq i\leq n-1\colon \alpha_i(x)\neq \alpha_i(y)\} .\]
For $x\in X$, $n\geq 1$ and $\varepsilon>0$, let
\[ B_n^\alpha(x,\varepsilon)=\{y\in X\colon H^\alpha_n(x,y)<\varepsilon\} \]
and
\[
K(n,\alpha,\varepsilon )=\min\{ \#(F):  \mu(\bigcup_{x\in F }B_n^\alpha(x,\varepsilon) )>1-\varepsilon\}.
\]
It was proved by Ferenczi \cite[Proposition 3]{Fer} that
if $\mu$ is ergodic, $(X,\mathcal{B},\mu,T)$ has discrete spectrum
if and only if for any finite partition $\alpha$ of $X$ and $\varepsilon>0$
there exists $C>0$ such that $K(n,\alpha,\varepsilon)\leq C$ for all $n\geq 1$.
We will show Ferenczi's result holds for any invariant measure by proving a stronger result.

We say that a subset $K$ of $X$ is an \emph{equicontinuous set in the mean}
if for every $\ep>0$,
there exists a $\delta>0$ such that for any $n\in \N$ and
all $x,y\in K$ with $d(x,y)<\delta$ we have $\frac{1}{n}\sum_{i=0}^{n-1} d(T^ix,T^iy)<\ep$.
$(X,T)$ is  \emph{$\mu$-equicontinuous in the mean} if for every $\tau>0$
there exists a subset $K$ of $X$ which is an equicontinuous set in the mean with $\mu(K)>1-\tau$.

\begin{thm}\label{0624}
Let $(X,T)$ be a t.d.s. and $\mu\in M(X,T)$. Then the following statements are equivalent:
\begin{enumerate}

\item $(X,T)$ is $\mu$-mean equicontinuous.

\item $(X,T)$ is $\mu$-equicontinuous in the mean.

\item For any finite partition $\alpha=\{A_1,\dotsc,A_l\}$ of $X$ and $\varepsilon>0$
there exists $C>0$ such that $K(n,\alpha,\varepsilon)\leq C$ for all $n\geq 1$.

\item For any finite partition $\alpha=\{A_1,\dotsc,A_l\}$ of $X$ and $\varepsilon>0$ there is $k\in \N$ and measurable sets ${B_1,B_2,\cdots,B_k}$
satisfying $\mu(\overset{k}{\underset{ i=1}{\cup}} B_i)>1-\varepsilon$, and if $x,y\in B_i$ for some $1\leq i\leq k$, then
$\underset{n\ra \infty}{\limsup}  H_n^\alpha(x,y)<\varepsilon$.

\item For any finite partition $\alpha=\{A_1,\dotsc,A_l\}$ of $X$ and $\varepsilon>0$ there is $k\in \N$ and measurable sets ${B_1,B_2,\cdots,B_k}$
satisfying $\mu(\overset{k}{\underset{ i=1}{\cup}} B_i)>1-\varepsilon$, and if $x,y\in B_i$ for some $1\leq i\leq k$, then
$H_n^\alpha(x,y)<\varepsilon$ for any $n\in \N$.
\end{enumerate}
\end{thm}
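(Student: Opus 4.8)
The plan is to show the five conditions are equivalent through a web of implications, organized by the three kinds of passage they involve: dropping a uniform-in-$n$ bound to a $\limsup$ bound (which is free), translating between the metric $d$ and a partition $\alpha$, and---the delicate part---upgrading a $\limsup$ bound or a family of finite-time coverings to a bound valid for \emph{all} $n$. The free implications are $(2)\Rightarrow(1)$ and $(5)\Rightarrow(4)$, since the supremum over $n$ dominates the $\limsup$; and $(5)\Rightarrow(3)$, since if $B_1,\dots,B_k$ are sets on which $H_n^\alpha(x,y)<\varepsilon$ for all $n$, then choosing one point $x_i\in B_i$ gives $B_i\subseteq B_n^\alpha(x_i,\varepsilon)$ for every $n$, whence $K(n,\alpha,\varepsilon)\le k$ for all $n$.

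For the passage between the metric and a partition I would use two elementary pointwise inequalities. To go from the metric to a partition $\alpha=\{A_1,\dots,A_l\}$ (that is, $(1)\Rightarrow(4)$ and $(2)\Rightarrow(5)$), I first use inner regularity to pick compact $C_j\subseteq A_j$ with $G:=\bigcup_j C_j$ of measure close to $1$; since the $C_j$ are disjoint compacta, $\rho:=\min_{j\ne j'}d(C_j,C_{j'})>0$, and one checks that
\[
H_n^\alpha(x,y)\le \frac{1}{\rho}\,\bar d_n(x,y)+\frac1n\sum_{i=0}^{n-1}\mathbf 1_{X\setminus G}(T^ix)+\frac1n\sum_{i=0}^{n-1}\mathbf 1_{X\setminus G}(T^iy),
\]
where $\bar d_n(x,y)=\frac1n\sum_{i=0}^{n-1}d(T^ix,T^iy)$. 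The two complement-frequency terms are handled by Birkhoff's theorem (for the $\limsup$ statement $(4)$) or by the maximal ergodic theorem (for the all-$n$ statement $(5)$), after discarding an exceptional set of small measure. Conversely, to return from a partition to the metric ($(4)\Rightarrow(1)$ and $(5)\Rightarrow(2)$) I would take $\alpha$ to consist of cells of diameter less than $\eta$; then $\bar d_n(x,y)\le \eta+\diam(X)\,H_n^\alpha(x,y)$, so a small Hamming bound forces a small mean-metric bound, for the $\limsup$ and the all-$n$ versions alike.

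With these translations in hand, the two groups $\{(1),(4)\}$ and $\{(2),(5)\}$ are each internally equivalent, and the remaining content is to link the groups by one $\limsup$-to-all-$n$ upgrade, say $(1)\Rightarrow(2)$ (equivalently $(4)\Rightarrow(5)$), and to fold in the complexity condition $(3)$, for which $(5)\Rightarrow(3)$ is already free. The tool I expect to use is the maximal ergodic (Hardy--Littlewood) inequality $\mu\{x:\sup_n\frac1n\sum_{i<n}g(T^ix)>\lambda\}\le\|g\|_1/\lambda$, which is exactly what turns an integrable control into a control uniform in $n$ off a small set, and which already supplied the all-$n$ bounds for the complement indicators above.

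The main obstacle is this last upgrade in its genuine form. A $\limsup$ bound, or a bounded family of coverings by balls $B_n^\alpha(\cdot,\varepsilon)$ whose centers drift with $n$ as in $(3)$, only controls the averages along pair-dependent subsequences, and $H_n^\alpha(x,y)=\frac1n\sum_{i<n}\mathbf 1[\alpha_i(x)\ne\alpha_i(y)]$ is a two-point, non-autonomous average to which the single-point maximal inequality does not apply verbatim. I would try to overcome this either by a compactness and selection argument that extracts, from the bounded family of finite-time coverings, a single covering into sets of small $\limsup$-Hamming diameter, or---more safely---by routing through the spectral picture: prove that bounded complexity $(3)$ forces the eigenfunctions to span $L^2(X,\mu)$, adapting Ferenczi's ergodic estimate via the ergodic decomposition so as to make it uniform across components, and then invoke the cited equivalence of $\mu$-mean equicontinuity $(1)$ with discrete spectrum \cite{HLTXY} to close the cycle. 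I expect making that estimate uniform over the ergodic decomposition, in the absence of ergodicity, to be the crux of the whole argument.
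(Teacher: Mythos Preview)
Your overall organization is sound, and the metric-to-partition translations you sketch (the inequalities $H_n^\alpha(x,y)\le \rho^{-1}\bar d_n(x,y)+\text{complement frequencies}$ and $\bar d_n(x,y)\le \eta+\diam(X)\,H_n^\alpha(x,y)$) are essentially what the paper uses for $(2)\Rightarrow(3)$ and $(5)\Rightarrow(2)$. The free implications you list are correct, and the paper, like you, simply cites \cite{HLTXY} for $(1)\Leftrightarrow(2)$ rather than proving it.

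Where you diverge is exactly at the step you flag as the main obstacle, the upgrades $(3)\Rightarrow(5)$ and $(4)\Rightarrow(5)$. The paper's key observation---which you do not mention and which dissolves your ``two-point, non-autonomous average'' worry---is that
\[
H_n^\alpha(x,y)=\frac1n\sum_{i=0}^{n-1}\mathbf 1_D\bigl((T\times T)^i(x,y)\bigr),\qquad D=(X\times X)\setminus\bigcup_j A_j\times A_j,
\]
is an honest Birkhoff average for the \emph{product} system $(X\times X,\,T\times T,\,\mu\times\mu)$. Hence $H_n^\alpha(x,y)\to H^*(x,y)$ for $\mu\times\mu$-a.e.\ $(x,y)$, with no need for a two-point maximal inequality. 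One then applies Egorov's theorem to get a set $R\subseteq X\times X$ of measure $>1-r^2$ on which $|H_n^\alpha-H_{N_0}^\alpha|<r$ for all $n\ge N_0$, and Fubini to produce $A\subseteq X$ of measure $>1-r$ with $\mu(R_y)>1-r$ for $y\in A$. The $N_0$-time covering centers from $(3)$ are shifted into $A$, and the Egorov bound plus the triangle inequality give $H_n^\alpha(x,y_i)<\varepsilon/2$ for all $n\ge N_0$; the finitely many $n<N_0$ are absorbed by refining with $\bigvee_{i=0}^{N_0-1}T^{-i}\alpha$, on each cell of which $H_n^\alpha\equiv 0$ for $n\le N_0-1$. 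The same template, with $H^*$ replacing $H_{N_0}^\alpha$, yields $(4)\Rightarrow(5)$.

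Your two proposed alternatives are not wrong in spirit but are either underspecified or roundabout. The ``compactness and selection'' idea is vague as stated: the balls $B_n^\alpha(\cdot,\varepsilon)$ and their centers both vary with $n$, and there is no obvious limit object to select. The spectral detour (push $(3)$ to discrete spectrum via an ergodic-decomposition version of Ferenczi, then invoke \cite{HLTXY}) could in principle close the loop, but the Birkhoff-on-product plus Egorov argument handles the non-ergodic case directly and in a few lines, and is the idea you are missing.
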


\begin{proof}$(1)\Leftrightarrow(2)$ is \cite[Theorem 4.2]{HLTXY}.

$(2)\Rightarrow(3)$
Let $\alpha=\{A_1,\dotsc,A_l\}$ be a measurable partition of $X$.
By the regularity of $\mu$, $A_i$ can be approximated in
measure by a sequence of its closed subsets $P_i^m$ such that $\mu(A_i\setminus P_i^m)< \frac{1}{m}\mu(A_i)$.
We let $U_m=X\setminus \overset{l}{\underset{i=1}{\cup}}P_i^m$. Then $\mu(U_m)<\frac{1}{m}$ for $m\in \N$.
Let $s_m(i,j)=\min\{d(x_i^m,x_j^m):x_i^m\in P_i^m,x_j^m\in P_j^m\}$ and $s_m=\underset{1 \leq i\not=j\leq l}{\min} s_m(i,j)$.
Fix $\ep>0$, there is $m'\in \N$ such that $\mu(U_{m'})<\frac{1}{m'}<\frac{\ep^2}{2}$.

As $(X,T)$ is  $\mu$-equicontinuous in the mean,
There is a subset $K$ with $\mu(K)>1-\frac{\ep^2}{2}$
and $\delta>0$ such that
\[
 \frac{1}{n}\sum_{i=0}^{n-1}d(T^ix,T^iy)<\ep^2 s_{m'}
\]
for all $n\in \N$ and $x,y\in K$ with $d(x,y)<\delta$.
Let $U'=K\cap U_{m'}^c$, choose compact set $U\subset U'$ such that $\mu(U)>1-\ep^2$.

For $n\in\mathbb{N}$ and $x\in X$, let $E(x)=\{i\ge 0: T^ix\in U\}$ and $ E_n=\{x\in X:\frac{\#(E(x)\cap [0,n-1])}{n}> 1-\epsilon\}.$
Then $\mu(E_n)> 1-\epsilon$. By the regularity of $\mu$, we can choose compact set $G_n\subset E_n\cap U$ such that $\mu(G_n)>1-\ep-\ep^2$.

For $x,y\in G_n$ with $d(x,y)<\delta$,  let  $A_n(x,y)=\{0\leq i \leq n-1: d(T^ix,T^iy)<  s_{m'} \},$
then $\frac{1}{n}\#(A_n(x,y))\geq 1-\ep^2$.
For $i\in E(x)\cap E(y)\cap A_n(x,y)$,  $T^ix,T^iy\in K\cap (\overset{l}{\underset{j=1}{\cup}}P_j^{m'})$.
By the definition of $s_{m'}$, we have
$(T^ix,T^iy)\in \overset{l}{\underset{j=1}{\cup}} P_j^{m'}\times P_j^{m'} \subset  \overset{l}{\underset{j=1}{\cup}}A_j\times A_j$.
So $H_n^\alpha(x,y)\leq 2\ep+\ep^2$ for any $n\in\N$.

Since $U$ is compact, there exist $x_1,x_2,\cdots,x_M\in U$ such that
$\bigcup_{r=1}^M B(x_r,\frac{\delta}{2})\supseteq U$.
Let $I_n=\{ r\in [1,m]: B(x_r,\frac{\delta}{2})\cap G_n\neq \emptyset\}$.
For $r\in I_n$, we choose $y_r^n\in B(x_r,\frac{\delta}{2})\cap G_n$.
Then
$$\bigcup_{r\in I_n} \big( B(y_r^n,\delta)\cap G_n\big) \supseteq \bigcup_{r\in I_n} \big( B(x_r,\frac{\delta}{2})\cap G_n\big)=G_n.$$
So $\mu(\bigcup_{r\in I_n }B_n^\alpha(y_r^n,2\ep+\ep^2) )>1-\ep-\ep^2,$ then $K(n,\alpha,\ep^2+2\ep )\leq M$ for any $n\in \N$.

$(3)\Rightarrow(5)$
Let $\ep>0$. There is $C=C(\ep)$ such that for any $n\in\N$,
there is $F_n\in X$ with $\#(F_n)\leq C$  such that
\[
\mu\biggl(\bigcup_{x\in F_n} B_{n}^\alpha(x,\ep/8)\biggr)>1-\ep/8.
\]
Let $D=X\times X\setminus \overset{l}{\underset{ i=1}{\cup}} A_i\times A_i$.
By the Birkhoff pointwise ergodic theorem for $\mu\times \mu$ a.e. $(x,y)\in X^2$
$$H_n^\alpha(x,y)=\frac{1}{n}\sum_{i=0}^{n-1}1_D(T^ix,T^iy)\rightarrow H^*(x,y).$$
So for a given $0<r< \min\{1,\frac{\ep}{2C}\}$, by Egorov's theorem there are $R\subset X^2$ with $\mu\times \mu(R)>1-r^2$ and $N_0\in\N$
such that if $(x,y)\in R$ then
$$|H_n^\alpha(x,y)-H_{N_0}^\alpha(x,y)|<r,\ \text{for}\ n\ge N_0.$$
By Fubini's theorem there is $A\subset X$ such that $\mu(A)>1-r$ and for any $x\in A$,
$\mu(R_x)>1-r$, where
\[R_x=\{y\in X: (x,y)\in R\}.\]
Enumerate $F_{N_0}=\{x_1,x_2,\dotsc,x_m\}$. Then $m\leq C$.
Let $I=\{1\leq i\leq m\colon A\cap B_{N_0}^\alpha(x_i, \ep/8)
\neq\emptyset \}$.
Denote $\#(I)=m'$. Then $1\leq m'\leq m$.
For each $i\in I$, pick $y_i\in A\cap  B_{N_0}^\alpha(x_i, \ep/8)$.
Then we have $B_{N_0}^\alpha(x_i, \ep/8)\subset
B_{N_0}^\alpha(y_i, \ep/4)$ for all $i\in I$.
As
\begin{align*}
\mu\biggl( A \cap \bigcap_{i\in I} R_{y_i}
\cap \bigcup_{x\in F_{N_0}} B_{N_0}^\alpha(x, \ep/8)
\biggr)\geq 1-r-m'r-\ep/8>1-\ep,
\end{align*}
choose a compact subset
\[K \subset A \cap \bigcap_{i\in I} R_{y_i}
\cap \bigcup_{x\in F_{N_0}} B_{N_0}^\alpha(x, \ep/8)\] with $\mu(K)>1-\ep$.
If $x\in K$, there exists $i\in I$
such that $x\in R_{y_i}\cap B_{N_0}^\alpha(y_i, \ep/4)$.
Then $(y_i,x)\in R$.
By the construction of $R$,
for any $n\ge N_0$,
\[H_{n}^\alpha(x,y_i)\le
H_{N_0}^\alpha(x,y_i)+r<\ep/4+r<\ep/2.\]

Let $\mathcal{C}=\bigvee_{i=0}^{N_0-1} T^{-i}\alpha=\{C_1,C_2,\cdots,C_p\}$ with $p\leq l^{N_0}$. Let $E_j^i=C_j\cap B_{N_0}^\alpha(y_i, \ep/4)$.
Then $K\subset \underset{ i\in I}{\bigcup}\overset{p}{\underset{ j=1}{\cup}} E_j^i$.
For any $(x,y)\in E_j^i$, we have $H_{n}^\alpha(x,y)=0$ for any $1\leq n\leq N_0-1$, thus $H_{n}^\alpha(x,y)<\ep$ for any $n\in \N.$

$(5)\Rightarrow(2)$  Assume that diam$(X)\leq 1$.
For every $\varepsilon>0$ and any $m\in \N$ there is  finite partition $\alpha^m=\{A_m^1,A_m^2,\cdots,A_m^{t_m}\}$ of $X$
with $diam(A_m^i)<\frac{\ep}{2^m}$ such that
for any $b\in \N$
there is $n_{m,b}\in \N$ and measurable sets ${B_{m,b}^1,B_{m,b}^2,\cdots,B_{m,b}^{n_{m,b}}}$
 satisfying $\mu(\overset{n_{m,b}}{\underset{ i=1}{\cup}} B_{m,b}^i)>1-\frac{\varepsilon}{2^b}\frac{1}{2^m}$,
 and if $x,y\in B_{m,b}^i$ for $1\leq i\leq n_{m,b}$ then
 $ H_n^{\alpha^m}(x,y)<\frac{\varepsilon}{2^b}$ for any $n\geq 1$. By the regularity of $\mu$, we can assume that $B_{m,b}^i$  is a closed set.
 Let $K_{m,b}=  \overset{n_{m,b}}{\underset{ i=1}{\cup}} B_{m,b}^i $, then  $\mu(K_{m,b})>1-\frac{\varepsilon}{2^b}\frac{1}{2^m}.$
 Let $K_m=\overset{\infty}{\underset{ b=1}{\cap}} K_{m,b}$, then  $\mu(K_{m})>1-\frac{\varepsilon}{2^m}.$
 Let $K=\overset{\infty}{\underset{ m=1}{\cap}} K_m$, then  $\mu(K)>1-\varepsilon.$

 Now we show $K$ is equicontinuous in the mean.
 Suppose to the contrary that $K$ is not equicontinuous in the mean.
 Then by the definition there exists $\eta>0$ such that for any $k\geq 1$ there are $x_k,y_k\in K$
 and $s_k$ such that $d(x_k,y_k)<\frac{1}{k}$ and $\bar d_{s_k}(x_k,y_k)\geq \eta$.
 As $K$ is compact, without loss of generality assume that $x_k\to x\in K$,
 we also have $y_k\to x$.
 For any $k\in\N$, by the triangle inequality, either
 $\bar d_{s_k}(x_k,x)\geq \frac{\eta}{2}$ or $\bar d_{s_k}(y_k,x)\geq \frac{\eta}{2}$.
 Without loss of generality, we always have $\bar d_{s_k}(x_k,x)\geq \frac{\eta}{2}$.

 Let $E_m^k=\{0\leq i\leq s_k-1\colon \alpha_i^m(x)\neq \alpha_i^m(x_k)\}.$ Then $H^{\alpha^m}_{s_k}(x_k,x)=\frac{1}{s_k}\#(E_m^k)$.
 Since  diam$(X)\leq 1$ and diam$(A_m^i)<\frac{\ep}{2^m}$.
 So if $i\in E_m^k$, then $d(x,x_k)\leq 1$, if $i \in [0,s_k-1]\setminus E_m^k$, then $d(x,x_k)\leq \frac{\ep}{2^m}$. So
 $$\frac{1}{s_k}(\#(E_m^k)\times 1+(s_k-\#(E_m^k))\times \frac{\ep}{2^m})\geq \bar d_{s_k}(x_k,x)\geq \frac{\eta}{2}.$$
 Fix $m'\in \N$ with $\frac{\ep}{2^{m'}}\leq \frac{\eta}{4}$, we have $\#(E_{m'}^k)\geq \frac{\eta}{4}s_k$.
 Then $H^{\alpha^{m'}}_{s_k}(x_k,x)\geq \frac{\eta}{4}$.
 Choose $b'\in \N$ such that $\frac{\ep}{2^{b'}}< \frac{\eta}{4}$.
 Without loss of generality,  we assume $\{x_k\}\subset B_{m',b'}^i$ for some $1\leq i\leq n_{m',b'}$.
 Since $B_{m',b'}^i$ is closed, $x\in B_{m',b'}^i$,
 then $ H^{\alpha^{m'}}_{s_k}(x_k,x)< \frac{\ep}{2^{b'}}<\frac{\eta}{4} $, a contradiction.

$(5)\Rightarrow(3)$ and $(5)\Rightarrow(4)$ are obvious.

$(4)\Rightarrow(5)$
Fix $\ep>0$, then there exist measurable sets ${B_1,B_2,\cdots,B_k}$
satisfying $\mu(\overset{k}{\underset{ i=1}{\cup}} B_i)>1-\frac{\varepsilon}{8}$ and
 $\underset{n\ra \infty}{\limsup} H_n^\alpha(x,y)<\varepsilon/8$
for $x,y\in \overset{k}{\underset{ i=1}{\cup}} B_i \times B_i$. Let $D=X\times X\setminus \overset{l}{\underset{ i=1}{\cup}} A_i\times A_i$.
By the Birkhoff pointwise ergodic theorem for $\mu\times \mu$ a.e. $(x,y)\in X^2$
$$H_n^\alpha(x,y)=\frac{1}{n}\sum_{i=0}^{n-1}1_D(T^ix,T^iy)\rightarrow H^*(x,y).$$
So for a given $0<r< \min\{1,\frac{\ep}{2k}\}$, by Egorov's theorem there are $R\subset X^2$ with $\mu\times \mu(R)>1-r^2$ and $N_0\in\N$
such that if $(x,y)\in R$ then
$$|H_n^\alpha(x,y)-H^*(x,y)|<r,\ \text{for}\ n\ge N_0.$$
By Fubini's theorem there is $A\subset X$ such that $\mu(A)>1-r$ and for any $x\in A$,
$\mu(R_x)>1-r$, where
\[R_x=\{y\in X: (x,y)\in R\}.\]

Let  $B^*(x,\delta)=\{y\in X\colon H^*(x,y)<\delta\}$.
Choose $x_i\in B_i$ and
let $I=\{1\leq i\leq k\colon A\cap B^*(x_i, \ep/8)
\neq\emptyset \}$.
Denote $\#(I)=k'$. Then $1\leq k'\leq k$.
For each $i\in I$, pick $y_i\in A\cap  B^*(x_i, \ep/8)$.
Then we have $B^*(x_i, \ep/8)\subset
B^*(y_i, \ep/4)$ for all $i\in I$.
As
\begin{align*}
\mu\biggl( A \cap \bigcap_{i\in I} R_{y_i}
\cap \bigcup_{i=1}^k B^*(x_i, \ep/8)
\biggr)\geq 1-r-m'r-\ep/8>1-\ep,
\end{align*}
choose a compact subset
\[K \subset A \cap \bigcap_{i\in I} R_{y_i}
\cap \bigcup_{i=1}^k B^*(x_i, \ep/8)\] with $\mu(K)>1-\ep$.
If $x\in K$, there exists $i\in I$
such that $x\in R_{y_i}\cap B^*(y_i, \ep/4)$.
Then $(y_i,x)\in R$.
By the construct of $R$,
for any $n\ge N_0$,
\[H_{n}^\alpha(x,y_i)\le
H^*(x,y_i)+r<\ep/4+r<\ep/2.\]

Let $\mathcal{C}=\bigvee_{i=0}^{N_0-1} T^{-i}\alpha=\{C_1,C_2,\cdots,C_p\}$ with $p\leq l^{N_0}$. Let $E_j^i=C_j\cap B_{N_0}^\alpha(y_i, \ep/4)$.
Then $K\subset \underset{ i\in I}{\bigcup}\overset{p}{\underset{ j=1}{\cup}} E_j^i$.
For any $(x,y)\in E_j^i$, we have $H_{n}^\alpha(x,y)=0$ for any $n\leq N_0-1$, thus $H_{n}^\alpha(x,y)<\ep$ for any $n\in \N.$

\end{proof}

\begin{cor}
	Let $(X,\B,\mu,T)$ be a m.p.s. where $(X,\mathcal{B},\mu)$ is a Lebesgue space.
Then the following statements are equivalent:
\begin{enumerate}

\item $(X,\B,\mu,T)$ has discrete spectrum.

\item For any finite partition $\alpha=\{A_1,\dotsc,A_l\}$ of $X$ and $\varepsilon>0$
there exists $C>0$ such that $K(n,\alpha,\varepsilon)\leq C$ for all $n\geq 1$.

\end{enumerate}
\end{cor}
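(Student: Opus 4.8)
The plan is to reduce the statement to the topological setting already treated in Theorem \ref{0624}, using the topological realization provided by Theorem \ref{0625} together with the observation that both conditions in the corollary are invariants of measure-theoretic isomorphism.

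First I would pass to a topological model. By Theorem \ref{0625} there is a t.d.s.\ $(Y,S)$, an $S$-invariant Borel probability measure $\nu$, and a measure-theoretic isomorphism $\phi\colon (Y,\B_Y,\nu,S)\ra (X,\B,\mu,T)$. Discrete spectrum is a property of the Koopman operator on $L^2$, hence determined by the measure-theoretic isomorphism class; so $(X,\B,\mu,T)$ has discrete spectrum if and only if $(Y,\B_Y,\nu,S)$ does. By the cited result of Huang et al.\ \cite{HLTXY}, $(Y,\B_Y,\nu,S)$ has discrete spectrum if and only if $(Y,S)$ is $\nu$-mean equicontinuous, and by the equivalence $(1)\Leftrightarrow(3)$ of Theorem \ref{0624}, applied to the t.d.s.\ $(Y,S)$ with invariant measure $\nu$, this in turn holds if and only if the complexity condition $(2)$ of the corollary holds for $(Y,\B_Y,\nu,S)$. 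This chains statement $(1)$ of the corollary to the complexity condition \emph{for the model} $(Y,S)$.

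It then remains to transfer the complexity condition between the two systems, and this is where the actual work lies. The key point is that $K(n,\alpha,\varepsilon)$ is a measure-theoretic invariant. Given a finite measurable partition $\alpha=\{A_1,\dots,A_l\}$ of $X$, I would set $\tilde\alpha=\phi^{-1}\alpha=\{\phi^{-1}A_1,\dots,\phi^{-1}A_l\}$, a finite measurable partition of $Y$. Since $\phi$ intertwines $S$ and $T$ almost everywhere, the names satisfy $\alpha_i(\phi y)=\tilde\alpha_i(y)$ for $\nu$-a.e.\ $y$ and every $i$, whence $H_n^\alpha(\phi y,\phi y')=H_n^{\tilde\alpha}(y,y')$ for $(\nu\times\nu)$-a.e.\ $(y,y')$. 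Because $\phi$ pushes $\nu$ forward to $\mu$, the Hamming balls $B_n^\alpha(\cdot,\varepsilon)$ and their measures correspond under $\phi$, giving that the quantity $K(n,\alpha,\varepsilon)$ computed in $X$ equals the one computed in $Y$ for the partition $\tilde\alpha$, for every $n$. As $\alpha\mapsto\tilde\alpha$ is a bijection between finite measurable partitions of $X$ and those of $Y$, condition $(2)$ holds for $(X,\B,\mu,T)$ if and only if it holds for $(Y,\B_Y,\nu,S)$. Combining the two steps closes the loop and proves the corollary.

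The main obstacle I anticipate is precisely this last invariance argument: the isomorphism $\phi$ is only defined up to null sets, so one must check that the null-set ambiguities in the correspondence of names and balls do not affect $K(n,\alpha,\varepsilon)$. This is resolved by the fact that $K(n,\alpha,\varepsilon)$ is defined through a covering condition of measure $>1-\varepsilon$, which is insensitive to modifications on null sets; care is needed only to verify that the finitely many centers witnessing the bound on one side produce admissible centers on the other, which follows since $\phi$ is a measure isomorphism and the defining inequalities are preserved almost everywhere.
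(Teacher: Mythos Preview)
Your proposal is correct and follows essentially the same route as the paper: pass to a topological model via Theorem~\ref{0625}, use \cite{HLTXY} to identify discrete spectrum with $\nu$-mean equicontinuity, apply Theorem~\ref{0624} to get the complexity bound on the model, and transfer the complexity condition through the isomorphism via $\alpha\leftrightarrow\phi^{-1}\alpha$. The only difference is that you spell out the invariance of $K(n,\alpha,\varepsilon)$ under measure-theoretic isomorphism (including the null-set issue), whereas the paper simply asserts the correspondence.
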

\begin{proof}  By Theorem \ref{0625}, there is a topological dynamical system $(Y,S)$ such that $\pi:(Y,\B_Y,\nu,S)\ra (X,\B,\mu,T)$ is measure-theoretically isomorphic, where $\nu$ is an $S$-invariant Borel probability measure and $\B_Y$ is the $\sigma$-algebra
consisting of Borel subsets of $Y$.

$(1)\Rightarrow(2)$ Then $(Y,\B_Y,\nu,S)$ has discrete spectrum. By \cite[Theorem 4.2, 4.3, 4.6]{HLTXY}, $(Y,\B_Y,\nu,S)$ is $\mu$-mean equicontinuous.
By Theorem \ref{0624} for any finite partition $\pi^{-1}\alpha=\{\pi^{-1}A_1,\dotsc,\pi^{-1}A_l\}$ on $(Y,\B_Y,\nu,S)$ and $\varepsilon>0$
there exists $C>0$ such that $K(n,\pi^{-1}\alpha,\varepsilon)\leq C$ for all $n\geq 1$. Then for any finite partition $\alpha=\{ A_1,\dotsc, A_l\}$ on $(X,\B,\mu,T)$ and $\varepsilon>0$
there exists $C>0$ such that $K(n,\alpha,\varepsilon)\leq C$ for all $n\geq 1$.

$(2)\Rightarrow(1)$ just the same as $(1)\Rightarrow(2)$.
\end{proof}

\section{$\mu$-mean equicontinuous functions and complexity functions}

\subsection{$\mu$-mean equicontinuous functions}
In this subsection, we will give two definitions about measure-theoretic mean equicontinuous functions, and show that they are equivalent.
\begin{defn}
Let $(X,T)$ be a t.d.s. with a metric $d$ and $\mu\in M(X,T)$. We say that a function $f\in L^1(X,\mu)$ is
 $(\mu,d)$-mean equicontinuous if for any $\tau>0$ there is a compact subset $K$ of $X$ satisfying
$\mu(K)>1-\tau$, and for any $\ep$ there is $\delta>0$ whenever $d(x,y)<\delta$ with $x,y\in K$, we have
$$ \limsup_{n\to\infty}\frac{1}{n}\sum_{i=0}^{n-1}|f(T^ix)-f(T^iy)|<\ep.$$

\begin{defn}
Let $(X,\B,\mu,T)$ be a measure preserving system.
We say a function $f\in L^1(X,\mu)$ is
 $\mu$-mean equicontinuous if for any $\ep>0$ there is $k\in \N$ and measurable sets ${A_1,A_2,\cdots,A_k}$
satisfying $\mu(\overset{k}{\underset{ i=1}{\cup}} A_i)>1-\ep$, and if $x,y\in A_i$ for some $1\leq i\leq k$ then
$$ \limsup_{n\to\infty}\frac{1}{n}\sum_{i=0}^{n-1}|f(T^ix)-f(T^iy)|<\ep.$$
\end{defn}

\begin{prop}\label{0620}
Let $(X,T)$ be a t.d.s. with a metric $d$, $\mu\in M(X,T)$ and $f\in L^1(X,\mu)$. Then the following statements are equivalent:
\begin{enumerate}

\item $f$ is $(\mu,d)$-mean equicontinuous.

\item $f$ is $\mu$-mean equicontinuous.

\end{enumerate}

\end{prop}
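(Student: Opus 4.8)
The plan is to establish both implications, of which $(1)\Rightarrow(2)$ is routine and $(2)\Rightarrow(1)$ carries all the content. Throughout I would abbreviate $\bar f_n(x,y)=\frac1n\sum_{i=0}^{n-1}|f(T^ix)-f(T^iy)|$ and record that, since each $\bar f_n$ is a pseudometric, the quantity $\limsup_n\bar f_n$ obeys the triangle inequality.

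For $(1)\Rightarrow(2)$, fix $\ep>0$ and apply $(\mu,d)$-mean equicontinuity with $\tau=\ep$ to get a compact $K$ with $\mu(K)>1-\ep$; feeding the same $\ep$ back into the definition yields $\delta>0$ so that $d(x,y)<\delta$ with $x,y\in K$ forces $\limsup_n\bar f_n(x,y)<\ep$. Since $K$ is compact I would cover it by finitely many balls $B(z_1,\delta/2),\dots,B(z_k,\delta/2)$ and set $A_r=K\cap B(z_r,\delta/2)$. Any two points of a common $A_r$ lie within $\delta$ and inside $K$, so the limsup bound holds, while $\bigcup_r A_r=K$ has measure $>1-\ep$; thus condition $(2)$ holds.

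The direction $(2)\Rightarrow(1)$ is the hard one, because $(\mu,d)$-mean equicontinuity demands a \emph{single} compact set $K$, depending only on $\tau$, that serves \emph{every} $\ep$ at once, whereas $\mu$-mean equicontinuity couples the accuracy of the mean estimate to the measure deficiency through one parameter. To reconcile these I would fix $\tau>0$, choose $\eta_m>0$ with $\sum_m\eta_m<\tau$ and $\eta_m\to0$, and for each $m$ invoke $\mu$-mean equicontinuity at a level small enough that, after shrinking the resulting pieces to compact subsets by inner regularity, I obtain finite families $\{A_m^1,\dots,A_m^{k_m}\}$ of compact sets whose union $K_m$ satisfies $\mu(K_m)>1-\eta_m$ and such that $x,y$ in a common $A_m^i$ obey $\limsup_n\bar f_n(x,y)<\eta_m$. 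Setting $K=\bigcap_m K_m$ produces a compact set with $\mu(K)>1-\tau$.

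The remaining step, showing $K$ is $f$-mean equicontinuous, is where I expect the main obstacle, and I would attack it by contradiction. If $K$ fails to be $f$-mean equicontinuous there is $\ep_0>0$ such that for every $\delta>0$ some pair $x,y\in K$ has $d(x,y)<\delta$ yet $\limsup_n\bar f_n(x,y)\ge\ep_0$; taking $\delta=1/j$ gives $x_j,y_j\in K$ with $d(x_j,y_j)<1/j$ and $\limsup_n\bar f_n(x_j,y_j)\ge\ep_0$. By compactness of $K$ extract $x_j\to x^*\in K$, so $y_j\to x^*$ too, and the triangle inequality forces $\limsup_n\bar f_n(x_j,x^*)\ge\ep_0/2$ along a subsequence, after possibly interchanging $x_j$ and $y_j$. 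Now fix $m$ with $\eta_m<\ep_0/2$. Each $x_j$ lies in some $A_m^{i_j}$, so pigeonhole yields a further subsequence contained in one fixed compact set $A_m^{i_*}$; as that set is closed and $x_j\to x^*$, also $x^*\in A_m^{i_*}$. Then $x_j$ and $x^*$ share a common piece, whence $\limsup_n\bar f_n(x_j,x^*)<\eta_m<\ep_0/2$, contradicting the lower bound. The crux is exactly this marriage of finiteness (pigeonhole) with closedness of the pieces, which is what lets the defining inequality of $\mu$-mean equicontinuity survive the passage to the limit point $x^*$; this mirrors, in the purely function-theoretic setting, the argument used for $(5)\Rightarrow(2)$ in Theorem~\ref{0624}.
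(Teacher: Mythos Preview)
Your proof is correct, but the paper's argument for $(2)\Rightarrow(1)$ is simpler and avoids the contradiction altogether. After obtaining the compact pieces $A_m^1,\dots,A_m^{k_m}$ with union $K_m$ as you do, the paper first arranges the pieces at level $m$ to be \emph{pairwise disjoint} (disjointify the measurable sets before shrinking to compact subsets). It then sets $\delta_m=\min_{i\neq j} d(A_m^i,A_m^j)>0$ and observes directly that any two points $x,y\in K\subset K_m$ with $d(x,y)<\delta_m$ must lie in the \emph{same} piece $A_m^i$, whence $\limsup_n\bar f_n(x,y)<\eta_m$. Given $\ep>0$ one simply picks $m$ with $\eta_m<\ep$ and uses $\delta=\delta_m$. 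Your route via a limiting contradiction and pigeonhole (modelled on $(5)\Rightarrow(2)$ of Theorem~\ref{0624}) works, but the disjoint-compact-pieces trick gives the modulus $\delta$ explicitly and in one line; the extra structure you carry (sequences, subsequences, the limit point $x^*$) is not needed here because, unlike in Theorem~\ref{0624}, the defining inequality is already a pure limsup condition rather than a uniform-in-$n$ one.
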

\begin{proof}$(1)\Rightarrow(2)$ For any $\ep>0$, there is a compact set $M$ with $\mu(M)>1-\ep$,
and there is $\delta>0$ whenever $d(x,y)<2\delta$ with $x,y\in M$
$$ \limsup_{n\to\infty}\frac{1}{n}\sum_{i=0}^{n-1}|f(T^ix)-f(T^iy)|<\ep.$$
By the compactness of $M$, there is $k\in \N$ and $x_1,\dots, x_k\in M$ such that
$M=\overset{k}{\underset{ i=1}{\cup}}B(x_i,\delta)\cap M.$
Let  $A_i=B(x_i,\delta)\cap M$ for $1\leq i\leq k$, then $\mu(\overset{k}{\underset{ i=1}{\cup}} A_i)>1-\ep$, thus
 $f$ is $\mu$-mean equicontinuous.

$(2)\Rightarrow(1)$ For any $\tau>0$ and any $n\in \N$, there exist $k_n$ pairwise disjoint measurable subsets $\{A_i^n\}_{i=1}^{k_n}$ with
$\mu(\overset{k_n}{\underset{ i=1}{\cup}} A_i^n)>1-\frac{\tau}{2^{n}}$ such that if $x,y \in A_i^n$,
$$ \limsup_{m\to\infty}\frac{1}{m}\sum_{i=0}^{m-1}|f(T^ix)-f(T^iy)|<\frac{1}{n}.$$
By the regularity of $\mu$, we can require $A_i^n$ to be compact.
Let $\delta_n=\underset{ 1\leq i\not=j \leq k_n}{\min}d(A_i^n,A_j^n).$
Let $M_n=\overset{k_n}{\underset{ i=1}{\cup}} A_i^n$, $M=\overset{\infty}{\underset{n=1}{\cap}} M_n$, then $\mu(M)>1-\tau$ and $M$ is a compact set.
For any $\ep>0$ there is $n\in \N$ such that $\frac{1}{n}<\ep$. Then whenever $d(x,y)<\delta_n$ with $x,y\in M$, there exists $A_{i}^n$ such that
$x,y\in A_{i}^n$. Then $$ \limsup_{n\to\infty}\frac{1}{m}\sum_{i=0}^{m-1}|f(T^ix)-f(T^iy)|<\frac{1}{n}<\ep.$$ Then $f$ is $(\mu,d)$-mean equicontinuous.
\end{proof}

\begin{rem}This Proposition show that for a t.d.s. $(X,T)$, $\mu\in M(X,T)$,
the definition of $(\mu,d)$-mean equicontinuous functions does not rely on the metric $d$.
Thus in the following, we will simply write $\mu$-mean equicontinuous functions instead of $(\mu,d)$-mean equicontinuous functions.
\end{rem}

\begin{defn}
Let $(X,T)$ be a t.d.s. with a metric $d$ and $\mu\in M(X,T)$. We say a function $f\in L^1(X,\mu)$ is
  $\mu$-equicontinuous in the mean if for any $\tau>0$ there is a compact subset $K$ of $X$ satisfying
$\mu(K)>1-\tau$, and for any $\ep$ there is $\delta>0$ whenever $d(x,y)<\delta$ with $x,y\in K$, we have
$$\frac{1}{n}\sum_{i=0}^{n-1}|f(T^ix)-f(T^iy)|<\ep,$$ for any $n\ge 1$.
\end{defn}

\begin{thm}\label{0621}
Let $(X,T)$ be a t.d.s. with a metric $d$, $\mu\in M(X,T)$ and $f\in L^1(X,\mu)$.
Then the following statements are equivalent:
\begin{enumerate}
	\item  $f$ is $\mu$-equicontinuous in the mean.
	\item $f$ is $\mu$-mean equicontinuous.
\end{enumerate}
\end{thm}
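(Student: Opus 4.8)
The plan is to prove the two implications separately; $(1)\Rightarrow(2)$ is routine and $(2)\Rightarrow(1)$ carries the content. For $(1)\Rightarrow(2)$ I would argue directly. Fix $\ep>0$ and apply $\mu$-equicontinuity in the mean with $\tau=\ep$ to get a compact set $K$ with $\mu(K)>1-\ep$, and then, applying the defining property with the value $\ep/2$, a radius $\delta>0$ so that $d(x,y)<\delta$ and $x,y\in K$ force $\bar f_n(x,y)<\ep/2$ for every $n\ge 1$, whence $\limsup_n\bar f_n(x,y)\le\ep/2<\ep$. Covering $K$ by finitely many balls $B(x_1,\delta/2),\dots,B(x_k,\delta/2)$ and setting $A_j=B(x_j,\delta/2)\cap K$ gives measurable sets with $\mu(\bigcup_j A_j)=\mu(K)>1-\ep$ such that any two points of a common $A_j$ lie within $\delta$; this is exactly the passage from a compact ``metric'' set to abstract pieces used in $(1)\Rightarrow(2)$ of Proposition \ref{0620}, and it yields $\mu$-mean equicontinuity.

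For $(2)\Rightarrow(1)$ I would follow the scheme of $(4)\Rightarrow(5)\Rightarrow(2)$ in Theorem \ref{0624}, replacing the Hamming pseudometric $H_n^\alpha$ by $\bar f_n$. The first task is to upgrade the $\limsup$-control coming from $\mu$-mean equicontinuity into control of $\bar f_n$ for all large $n$ simultaneously. Apply the Birkhoff pointwise ergodic theorem to $(X\times X,\mu\times\mu,T\times T)$ and the function $g(x,y)=|f(x)-f(y)|\in L^1(\mu\times\mu)$: then $\bar f_n(x,y)\to f^*(x,y)$ for $\mu\times\mu$-a.e.\ $(x,y)$, and since the limit exists a.e.\ one has $\limsup_n\bar f_n=f^*$ a.e. Given $\ep>0$, take sets $B_1,\dots,B_k$ from the definition of $\mu$-mean equicontinuity with threshold $\ep/8$; then $f^*(x,y)<\ep/8$ for a.e.\ pair in a common $B_i$, so fixing a good base point $x_i\in B_i$ gives $B_i\subset B^*(x_i,\ep/8)$ up to a null set, where $B^*(x,\delta)=\{y:f^*(x,y)<\delta\}$. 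Now, exactly as in $(4)\Rightarrow(5)$ of Theorem \ref{0624}, Egorov's theorem produces $R\subset X^2$ with $\mu\times\mu(R)>1-r^2$ and an integer $N_0$ with $|\bar f_n-f^*|<r$ on $R$ for $n\ge N_0$, Fubini gives $A$ with $\mu(A)>1-r$ and $\mu(R_x)>1-r$ for $x\in A$, and the a.e.\ triangle inequality for $f^*$ (inherited from that of each $\bar f_n$) lets one pass to centres $y_i\in A\cap B^*(x_i,\ep/8)$ and the cover by the sets $B^*(y_i,\ep/4)$. This yields a set of measure close to $1$ covered by finitely many pieces on which $\bar f_n(x,y)<\ep/2$ for all $n\ge N_0$.

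It remains to control the finitely many indices $n<N_0$, the step with no analogue in the discrete setting (there one simply refines by $\bigvee_{i=0}^{N_0-1}T^{-i}\alpha$ to kill these terms). Since $f$ need not be continuous, I would instead apply Lusin's theorem to the measurable map $\Phi=(f,f\circ T,\dots,f\circ T^{N_0-1})\colon X\to\R^{N_0}$, obtaining a compact set $L$ of measure close to $1$ on which $\Phi$ is continuous, and partition $L$ into finitely many pieces of small $\Phi$-diameter; on each piece $\sum_{i=0}^{N_0-1}|f(T^ix)-f(T^iy)|$ is small, so $\bar f_n(x,y)\le\sum_{i=0}^{N_0-1}|f(T^ix)-f(T^iy)|$ is small for every $1\le n\le N_0$. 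Intersecting this partition with the cover from the previous step produces, for each prescribed $\ep$ and measure defect, finitely many pairwise disjoint closed sets (using regularity of $\mu$) on which $\bar f_n<\ep$ for all $n\ge 1$. Finally, running this along $\ep=1/m$ with summable defects $\tau/2^m$ and intersecting, as in $(2)\Rightarrow(1)$ of Proposition \ref{0620}, yields a single compact $K$ with $\mu(K)>1-\tau$; given any $\ep$ one picks $m$ with $1/m<\ep$ and lets $\delta$ be the positive minimal distance between the finitely many disjoint closed pieces at level $m$, so that $d(x,y)<\delta$ and $x,y\in K$ force $x,y$ into one piece and hence $\bar f_n(x,y)<\ep$ for all $n$. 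I expect the main obstacle to be precisely this small-$n$ control, together with the care needed to make the a.e.\ triangle inequality for $f^*$ legitimate at the chosen base points; both are manageable but require the measure-theoretic machinery (Lusin, Egorov, Fubini) that replaces the cleaner combinatorics available for partitions.
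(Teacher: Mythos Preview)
Your proposal is correct, but you take a genuinely different route for $(2)\Rightarrow(1)$. You pass to the product system, invoke the pointwise ergodic theorem for $g(x,y)=|f(x)-f(y)|$, and use Egorov and Fubini to uniformize the bound for $n\ge N_0$, mimicking $(4)\Rightarrow(5)$ of Theorem~\ref{0624} (equivalently $(3)\Rightarrow(5)$ of Theorem~\ref{0622}); afterwards you apply Lusin to $(f,f\circ T,\dots,f\circ T^{N_0-1})$ for the finite tail and assemble the compact set as in Proposition~\ref{0620}. The paper instead stays entirely in $X$ and avoids the ergodic theorem: starting from a single compact $f$-mean equicontinuous set $K$ on which $f$ is continuous, it covers $K$ by $\delta_l$-balls and observes that for each centre $x_l^j$ the sets $A_l^N(x_l^j)=\{y\in B(x_l^j,\delta_l)\cap K: \bar f_n(x_l^j,y)<1/l\text{ for all }n\ge N\}$ increase to $B(x_l^j,\delta_l)\cap K$; continuity of measure then yields a single $N_l$ working on a compact $K_l$ of nearly full measure, and intersecting the $K_l$ over $l$ and handling $n<N_l$ via Lusin gives the result. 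The paper's exhaustion argument is more elementary and one-dimensional; your Birkhoff--Egorov route is heavier but slots directly into the framework of Theorem~\ref{0622}, which has the merit of unifying those arguments. Your anticipated obstacles are exactly the right ones: the small-$n$ control is handled in both proofs by Lusin on the finite family $f,f\circ T,\dots,f\circ T^{N-1}$, and the a.e.\ issues for $f^*$ are resolved by choosing base points via Fubini, just as in the paper's $(4)\Rightarrow(5)$ of Theorem~\ref{0624}.
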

\begin{proof}
(1) $\Rightarrow$ (2) is obvious.

\medskip
(2) $\Rightarrow (1)$ Now assume that $f$ is $\mu$-mean equicontinuous.
Fix $\varepsilon>0$. By Lusin's theorem, there is a compact $K\subset X$ such that $\mu(K)>1-\ep$ and $K$ is an $f$-mean equicontinuous set and
$f$ is continuous on $K$.
For any $l >0$ there exists a $\delta_l>0$ such that
\[
\limsup_{n\ra \infty} \frac{1}{n}\sum_{i=0}^{n-1}|f(T^ix)-f(T^iy)|<\frac{1}{l}
\]
for all $x,y\in K$ with $d(x,y)<\delta_l$.
As $K$ is compact,
there exists a finite subset $F_l$ of $K$ such that
$K\subset \bigcup_{x\in F_l}B(x,\delta_l)$.
Enumerate $F_l$ as $\{x_l^1,x_l^2,\dotsc,x_l^{m_l}\}$.
For $j=1,\dotsc,m_l$ and $N\in\N$, let
\[A_l^N(x_l^j)=\biggl\{y\in B(x_l^j,\delta_l)\cap K\colon
\frac{1}{n}\sum_{i=0}^{n-1}|f(T^ix_l^j)-f(T^iy)|<\frac{1}{l},\
n=N,N+1,\dotsc\biggr\}.
\]
It is easy to see that for each $j=1,\dotsc,m_l$,
$\{A_l^N(x_j)\}_{N=1}^\infty$ is an increasing sequence and $B(x_l^j,\delta_l)\cap K=\underset{N=1}{\overset{\infty}{\cup}} A_l^N(x_l^j)$.
Choose $N_l\in\N$ and a compact subset $K_l$ of $K$
such that $\mu(K_l)>\mu(K)-\frac{\ep}{2^l}$ with $K_l\cap B(x_l^j,\delta_l)\subset A_l^{N_l}(x_l^j)$ for $j=1,\dotsc,m_l$.

Let $K_0=\underset{j=1}{\overset{\infty}{\cap}} K_j$, then $\mu(K_0)>1-2\ep$. Let $\delta_{l,1}$ be the Lebesgue number of $\{B(x_l^j,\delta_l):j=1,\cdots,m_l\}.$
By the continuity of $f$ on $K_0$,
there exists $\delta_{l,2}>0$ such that $$\frac{1}{n}\sum_{i=0}^{n-1}|f(T^ix)-f(T^iy)|<\frac{1}{l}$$ for $n=1,2,\cdots,N_l-1$
and every $x,y\in K_0$ with $d(x,y)<\delta_{l,2}$.
Let $\delta_{l,3}=\min\{\delta_{l,1},\delta_{l,2}\}$.
Then by the construction, we know that for every $x,y\in K_0$ with $d(x,y)<\delta_{l,3}$, and every $n\in \N$,
we have $\frac{1}{n}\sum_{i=0}^{n-1}|f(T^ix)-f(T^iy)|<\frac{2}{l}$. Thus $f$ is $\mu$-equicontinuous in the mean.

\end{proof}

\subsection{Bounded complexity with respect to functions}
In this subsection, we study $\mu$-mean equicontinuous functions and  measure-theoretic complexity with respect to some functions.

Let $(X,T)$ be a t.d.s.\ and
$\mu\in M(X,T)$, $f\in L^1(X,\mu)$. Define
\[ \bar{f}_n(x,y)= \frac{1}{n}\overset{n-1}{\underset{ i=0}{\sum}}|f(T^ix)-f(T^iy)| \text{ and }
\hat{f}_n(x,y)= \max\Bigl\{\bar f_k(x,y)\colon 1\le k\le n\Bigr\}.\]

For $x\in X$, $n\geq 1$ and $\varepsilon>0$, let
\[ B_{\bar{f}_n}(x,\varepsilon)=\{y\in X\colon \bar{f}_n(x,y)<\varepsilon\}.\]
\[ B_{\hat{f}_n}(x,\varepsilon)=\{y\in X\colon \hat{f}_n(x,y)<\varepsilon\}.\]
and
\[
K(n ,\varepsilon, {\bar{f}} )=\min\{\#(F)  \colon \mu(\bigcup_{x\in F }B_{\bar{f}_n}(x,\varepsilon) )>1-\varepsilon\}.
\]
\[
K(n ,\varepsilon, {\hat{f}} )=\min\{\#(F)  \colon \mu(\bigcup_{x\in F }B_{\hat{f}_n}(x,\varepsilon) )>1-\varepsilon\}.
\]

We say that $\mu$   has \emph{bounded topological complexity
with respect to $\{\bar{f}_n\}$} (resp. $\{\hat{f}_n\}$)
if for every $\ep>0$ there exists a positive integer $C=C(\ep)$
such that $K(n ,\varepsilon, {\bar{f}} )\leq C$ (resp. $K(n ,\varepsilon, {\hat{f}} )\leq C$) for all $n\geq 1$.
Motivated by \cite[Theorem 4.2, 4.3]{HLTXY}, we have the following theorem.

\begin{thm}\label{0622}
Let $(X,T)$ be a t.d.s.\ and
$\mu\in M(X,T)$, $f\in L^1(X,\mu)$.
Then the following statements are equivalent:
\begin{enumerate}
\item $f$ is $\mu$-mean equicontinuous.
\item $f$ is $\mu$-equicontinuous in the mean.

\item $\mu$ has bounded  complexity
with respect to   $\{\bar{f}_n\}$.

\item $\mu$ has bounded  complexity
with respect to   $\{\hat{f}_n\}$.

\item For any $\ep>0$ there is $k\in \N$  and measurable subsets $A_1,\cdots,A_k$ with $\mu(\overset{k}{\underset{ i=1}{\cup}} A_i)>1-\ep$ and
  $\hat f_n(x,y)<\ep$ for any $x,y\in A_i$ and any $n\in \N$.
\end{enumerate}

\end{thm}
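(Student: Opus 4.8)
The plan is to prove the equivalences by establishing $(1)\Leftrightarrow(2)$, $(2)\Rightarrow(5)$, $(5)\Rightarrow(4)\Rightarrow(3)$, $(3)\Rightarrow(5)$, and $(5)\Rightarrow(2)$, reusing the machinery already developed for the partition version in Theorem \ref{0624}. The equivalence $(1)\Leftrightarrow(2)$ is exactly Theorem \ref{0621}, so it may be cited. The easy directions come first. For $(2)\Rightarrow(5)$: given $\ep>0$, $\mu$-equicontinuity in the mean yields a compact $K$ with $\mu(K)>1-\ep$ and a $\delta>0$ so that $d(x,y)<\delta$ with $x,y\in K$ forces $\bar f_n(x,y)<\ep$ for all $n$, hence $\hat f_n(x,y)<\ep$ for all $n$; covering $K$ by finitely many balls $B(x_i,\delta/2)$ and setting $A_i=B(x_i,\delta/2)\cap K$ gives (5). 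For $(5)\Rightarrow(4)$ I would pick a point $x_i$ in each $A_i$, so that $A_i\subset B_{\hat f_n}(x_i,\ep)$ for every $n$ and thus $K(n,\ep,\hat f)\le k$; and $(4)\Rightarrow(3)$ is immediate from $\bar f_n\le\hat f_n$, which gives $B_{\hat f_n}(x,\ep)\subset B_{\bar f_n}(x,\ep)$ and hence $K(n,\ep,\bar f)\le K(n,\ep,\hat f)$.

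The substantive step is $(3)\Rightarrow(5)$, and I would follow the template of $(3)\Rightarrow(5)$ in Theorem \ref{0624}. Fix $\ep>0$ and use bounded $\{\bar f_n\}$-complexity to obtain, for each $n$, a set $F_n$ of size at most $C$ with $\mu\bigl(\bigcup_{x\in F_n}B_{\bar f_n}(x,\ep/8)\bigr)>1-\ep/8$. Applying the Birkhoff theorem to $(x,y)\mapsto|f(x)-f(y)|$ on $(X\times X,\mu\times\mu,T\times T)$ gives $\bar f_n(x,y)\to f^*(x,y)$ for a.e.\ $(x,y)$; Egorov then produces $R\subset X^2$ with $\mu\times\mu(R)>1-r^2$ and $N_0$ on which $|\bar f_n-\bar f_{N_0}|<r$ for $n\ge N_0$, and Fubini produces $A\subset X$ with $\mu(A)>1-r$ whose fibres $R_x$ are large. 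Exactly as in Theorem \ref{0624}, after passing to the index set $I$ of those $x_i\in F_{N_0}$ meeting $A$, choosing representatives $y_i\in A\cap B_{\bar f_{N_0}}(x_i,\ep/8)$, and intersecting with $\bigcap_{i\in I}R_{y_i}$, I obtain a compact $K$ with $\mu(K)>1-\ep$ such that every $x\in K$ lies in some $B_{\bar f_{N_0}}(y_i,\ep/4)$ with $(y_i,x)\in R$, whence $\bar f_n(x,y_i)<\ep/2$ for all $n\ge N_0$.

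The genuinely new difficulty is the control of $\bar f_k$ for the \emph{small} indices $1\le k\le N_0-1$: in the partition setting this was handled for free by refining with $\mathcal C=\bigvee_{i=0}^{N_0-1}T^{-i}\alpha$, which makes $H_k^\alpha$ vanish for $k<N_0$, but no such exact vanishing is available for a real-valued $f$. I expect this to be the main obstacle, and I would resolve it by partitioning according to the values of the finite orbit segment. Since each $f\circ T^i$ is finite $\mu$-a.e., for small $\eta>0$ the map $\Phi(x)=(f(x),f(Tx),\dots,f(T^{N_0-1}x))$ takes values, off a set of measure $<\eta$, in a bounded box of $\R^{N_0}$; slicing that box into finitely many subcubes of side $<\ep$ and pulling back gives a finite measurable partition $\mathcal C'$ of a set of measure $>1-\eta$ on whose cells $|f(T^ix)-f(T^iy)|<\ep$ for every $i<N_0$, and therefore $\bar f_k(x,y)<\ep$ for all $k\le N_0$. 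Intersecting the cells of $\mathcal C'$ with the sets $K\cap B_{\bar f_{N_0}}(y_i,\ep/4)$ yields finitely many measurable sets covering more than $1-2\ep$ of $X$ on which $\bar f_k<\ep$ for $k\le N_0$ and $\bar f_n<\ep/2<\ep$ for $n\ge N_0$; hence $\hat f_n<\ep$ for all $n$, which (after relabelling $\ep$) is (5).

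Finally I would close the cycle with $(5)\Rightarrow(2)$, which runs parallel to the implication $(2)\Rightarrow(1)$ of Proposition \ref{0620}. Applying (5) with parameter $\ep_n=\min\{1/n,\tau/2^{n}\}$ gives, for each $n$, finitely many sets which I may take pairwise disjoint and compact by the regularity of $\mu$, whose union $M_n$ satisfies $\mu(M_n)>1-\tau/2^n$ and on each cell of which $\hat f_m<1/n$ for all $m$; setting $M=\bigcap_n M_n$ produces a compact set with $\mu(M)>1-\tau$. Taking $\delta_n$ to be the minimal gap between the disjoint compact pieces at level $n$, any $x,y\in M$ with $d(x,y)<\delta_n$ must lie in a common cell, so $\bar f_m(x,y)\le\hat f_m(x,y)<1/n$ for all $m$. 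Given $\ep>0$, choosing $n$ with $1/n<\ep$ exhibits the required $\delta=\delta_n$, so $f$ is $\mu$-equicontinuous in the mean, completing the equivalences.
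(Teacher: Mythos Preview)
Your proposal is correct and follows the same logical architecture as the paper's proof: the same cycle of implications, the same Birkhoff--Egorov--Fubini machinery for $(3)\Rightarrow(5)$ with the threshold $N_0$, and the same idea of intersecting a nested sequence of covers for $(5)\Rightarrow(2)$. Two implementation choices differ. For the small indices $k<N_0$ in $(3)\Rightarrow(5)$, the paper invokes Lusin to make $f\circ T^j$ continuous on a compact $K$ for $0\le j\le N_0$ and then covers $K$ by small metric balls, whereas you partition directly by the values of $\Phi=(f,f\circ T,\dots,f\circ T^{N_0-1})$; both are standard and equally valid, and your version avoids an extra appeal to compactness. For $(5)\Rightarrow(2)$, the paper argues by contradiction (extracting a convergent sequence in a single closed cell), while you use the direct ``minimal gap between disjoint compacta'' device already employed in Proposition~\ref{0620}; your route is slightly cleaner here.

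One small slip to fix: in your $(3)\Rightarrow(5)$ paragraph you write ``$\bar f_n<\ep/2<\ep$ for $n\ge N_0$'' on the cells. What you actually have on each cell is $\bar f_n(x_j,y_i)<\ep/2$ for $j=1,2$, and then the triangle inequality gives $\bar f_n(x_1,x_2)<\ep$, not $\ep/2$. The conclusion $\hat f_n<\ep$ is unaffected.
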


\begin{proof}
$(1)\Leftrightarrow(2)$ is Theorem \ref{0621}.

$(2)\Rightarrow(5)$
Fix $\varepsilon>0$,
there exists a  measurable subset $K$ of $X$ with $\mu(K)>1-\varepsilon$ such that $K$ is $f$-equicontinuous in the mean.
As the measure $\mu$ is regular, we can require  $K$ is compact.
There exists a $\delta>0$ such that for all $x,y\in K$ and any $n\in \N$ with $d(x,y)<\delta$
\[
\bar{f_n}(x,y)= \frac{1}{n}\sum_{i=0}^{n-1}|f(T^ix)-f(T^iy)|<\ep,
\]
thus $\hat{f_n}(x,y)<\ep$ for any $n\in \N$.
As $K$ is compact,
there exists a finite subset $F$ of $K$ such that
$K\subset \bigcup_{x\in F}B(x,\frac{\delta}{2})$.
Enumerate $F$ as $\{x_1,x_2,\dotsc,x_m\}$.
Then $\hat{f_n}(x,y)<\ep$ for any $n\geq 1$ with $x,y\in B(x_i,\frac{\delta}{2})$, $1\leq i\leq m$.

$(5)\Rightarrow(2)$
For any $\ep>0$ and any $m\in \N$ there is subset $K_m\subset X$ with $\mu(K_m)>1-\frac{\ep}{2^m}$ and
 $k_m\in \N$ such that $K_m\subset \overset{k_m}{\underset{ i=1}{\cup}} A_m^i$ such that $\hat f_n(x,y)<\frac{\ep}{2^m}$ for any $x,y\in A_m^i$ and any $n\in \N$.
 By the regularity of $\mu$, we can assume $A_m^i$ and $K_m$ to be compact.
 Let $K=\overset{\infty}{\underset{ m=1}{\cap}} K_m$, then $\mu(K)>1-\ep$.

Now we show $K$ is $f$-equicontinuous in the mean.
Assume contrary that $K$ is not $f$ equicontinuous in the mean.
Then by the definition there exists $\tau>0$ such that for any $k\geq 1$ there are $x_k,y_k\in K$ and $m_k\in \N$
 such that $d(x_k,y_k)<\frac{1}{k}$ and $\hat f_{m_k}(x_k,y_k)\geq \tau$.
As $K$ is compact, without loss of generality assume that $x_k\to x\in K$,
we also have $y_k\to x$.
For any $k\in\N$, by the triangle inequality, either
$\hat f_{m_k}(x_k,x)\geq \frac{\tau}{2}$ or $\hat f_{m_k}(y_k,x)\geq \frac{\tau}{2}$.
Without loss of generality, we always have $\hat f_{m_k}(x_k,x)\geq \frac{\tau}{2}$.

Choose $m'\in \N$ such that $\frac{\ep}{2^{m'}}<\frac{\tau}{2}$.
Note that the sequence $\{x_k\}$ is in $K$ and $\{A_{m'}^i\}$ is a finite cover of $K$.
Passing to a subsequence if necessary, we assume that the sequence $\{x_k\}$ is in $A_{m'}^{i_0}$.
As $A_{m'}^{i_0}$ is closed, $x$ is also in $A_{m'}^{i_0}$.
Note that for any $u,v\in A_{m'}^{i_0}$ and any $n\geq 1$,
$\hat f_n(u,v)<\frac{\ep}{2^{m'}}<\frac{\tau}{2}$. Particularly, we have
$\hat f_{m_k}(x_k,x)<\frac{\ep}{2^{m'}}<\frac{\tau}{2}$, a contradiction.

$(5)\Rightarrow(4)$ and $(4)\Rightarrow(3)$ are obvious.

$(3)\Rightarrow(5)$
Assume that $\mu$ has bounded  complexity
with respect to  $\{\bar{f}_n\}$.
Let $\ep>0$. There is $C=C(\ep)$ such that for any $n\in\N$,
there is $F_n\in X$ with $\#(F_n)\leq C$  such that
\[
\mu\biggl(\bigcup_{x\in F_n} B_{\bar{f}_n}(x,\ep/8)\biggr)>1-\ep/8.
\]
By the Birkhoff pointwise ergodic theorem for $\mu\times \mu$ a.e. $(x,y)\in X^2$
$$\bar f_n(x,y)=\frac{1}{n}\sum_{i=0}^{n-1}|f(T^ix)-f(T^iy)|\rightarrow f^*(x,y).$$
So for a given $0<r< \min\{1,\frac{\ep}{2C}\}$, by Egorov's theorem there are $R\subset X^2$ with $\mu\times \mu(R)>1-r^2$ and $N_0\in\N$
such that if $(x,y)\in R$ then
$$|\bar f_n(x,y)-\bar f_{N_0}(x,y)|<r,\ \text{for}\ n\ge N_0.$$
By Fubini's theorem there is $A\subset X$ such that $\mu(A)>1-r$ and for any $x\in A$,
$\mu(R_x)>1-r$, where
\[R_x=\{y\in X: (x,y)\in R\}.\]
Enumerate $F_{N_0}=\{x_1,x_2,\dotsc,x_m\}$. Then $m\leq C$.
Let $I=\{1\leq i\leq m\colon A\cap B_{\bar{f}_{N_0}}(x_i, \ep/8)
\neq\emptyset \}$.
Denote $\#(I)=m'$. Then $1\leq m'\leq m$.
For each $i\in I$, pick $y_i\in A\cap  B_{\bar{f}_{N_0}}(x_i, \ep/8)$.
Then we have $B_{\bar{f}_{N_0}}(x_i, \ep/8)\subset
B_{\bar{f}_{N_0}}(y_i, \ep/4)$ for all $i\in I$.
As
\begin{align*}
\mu\biggl( A \cap \bigcap_{i\in I} R_{y_i}
\cap \bigcup_{x\in F_{N_0}} B_{\bar{f}_{N_0}}(x, \ep/8)
\biggr)\geq 1-r-m'r-\ep/8>1-\ep,
\end{align*}
for any $i\in I$, by Lusin's theorem, we can choose compact subset
\[K_i \subset A \cap \bigcap_{i\in I} R_{y_i}
\cap B_{\bar{f}_{N_0}}(x_i, \ep/8)\] with $\mu(\underset{i\in I}{\cup} K_i)>1-\ep$ and $f(T^jx)$ is continuous on $\underset{i\in I}{\cup} K_i$ for $0\leq j \leq N_0$.
Let $K=\underset{i\in I}{\cup} K_i$.

If $x\in K$, there exists $i\in I$
such that $x\in K_i\subset R_{y_i}\cap B_{\bar{f}_{N_0}}(y_i, \ep/4)$.
Then $(y_i,x)\in R$.
By the construct of $R$,
for any $n\ge N_0$,
\[\bar f_n(y_i,x)\le
\bar f_{N_0}(y_i,x)+r<\ep/4+r<\ep/2.\]

Since $f(T^jx)$ is continuous on $K$ for $0\leq j \leq N_0$, there is $\delta>0$ such that for any $x_1,x_2$ with $d(x_1,x_2)<\delta$,
we have $|f(T^jx_1)-f(T^jx_2)|<\ep$ for any $0\leq j \leq N_0$. Thus $\bar f_n(x_1,x_2)<\ep$ for $1\leq n \leq N_0$.
Since $K_i$ is compact, there exists a finite subset $H_i$
of $K_i$ such that $K_i\subset \underset{x\in H_i}{\bigcup}B(x,\frac{\delta}{2})$.
Then for any $x_1,x_2\in B(x,\frac{\delta}{2})\cap K_i$ where $x\in H_i$,
we have $\bar f_n(x_1,x_2)\leq \bar f_n(x_1,y_i)+\bar f_n(x_2,y_i)<\ep$ for any $n\geq N_0$, and $\bar f_n(x_1,x_2)< \ep$ for $1\leq n \leq N_0$.
Thus $\hat f_n(x_1,x_2)<\ep$ for any $n\geq 1$.
Thus $K\subset \underset{i\in I}{\bigcup} (\underset{x\in H_i}{\bigcup} B(x,\frac{\delta}{2})\cap K_i)$, and
for any $x_1,x_2\in B(x,\frac{\delta}{2})\cap K_i$ where $x\in H_i$, $\hat f_n(x_1,x_2)<\ep$ for any $n\geq 1$.
\end{proof}

\section{$\mu$-mean equicontinuous functions and almost periodic function}

Garc\'ia-Ramos and Marcus \cite[Corollary 3.8]{GM15} proved the following proposition holds for ergodic measure, we will show that it holds for invariant measure.

\begin{prop}\label{0515}\cite{GM15} Let $(X,T)$ be a t.d.s.,
$\mu \in M^e(X,T)$ and $f\in L^2(X,\mu)$. Then the following statements are equivalent:
\begin{enumerate}

\item $f$ is an almost periodic function in $L^2(X,\mu)$.

\item $f$ is $\mu$-mean equicontinuous.

\item $f$ is $\mu$-mean expansive, that is, there is $\delta>0$ such that \[
\mu\times\mu\biggl(\biggl\{(x,y)\in X\times X\colon
\liminf_{n\to\infty}\frac{1}{n}\sum_{i=0}^{n-1} |f(T^ix)-f(T^iy)|>\delta
\biggr\}\biggr)=1.\]
\end{enumerate}
\end{prop}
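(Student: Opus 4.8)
The plan is to run the whole argument through the $U$-invariant orthogonal splitting $L^2(X,\mu)=H_c\oplus H_c^{\perp}$, where $H_c$ is the closed span of the eigenfunctions recalled in Section~2, writing $f=f_c+f_u$ with $f_c\in H_c$ the almost periodic part and $f_u\in H_c^{\perp}$ in the continuous-spectrum subspace; with this notation statement $(1)$ is exactly the assertion $f_u=0$. I would first settle $(1)\Leftrightarrow(2)$ directly, and then analyse the expansivity condition $(3)$ through the product system $(X\times X,T\times T,\mu\times\mu)$, which is the real crux.

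For $(1)\Rightarrow(2)$, fix $\eta>0$ and choose a finite combination $g=\sum_{k=1}^{N}c_k\phi_k$ of eigenfunctions $U\phi_k=\lambda_k\phi_k$, $|\lambda_k|=1$, with $\|f-g\|_2<\eta$. Since $|\lambda_k|=1$ we get $\bar f_n$-type control for $g$ that is uniform in $n$, namely $\bar g_n(x,y)\le\sum_{k=1}^{N}|c_k|\,|\phi_k(x)-\phi_k(y)|$ for every $n$, so the modulus of mean equicontinuity of $g$ is governed only by the ordinary continuity of the finitely many $\phi_k$; by Lusin's theorem these are continuous off a set of measure $<\eta$, which yields a mean-equicontinuous set for $g$. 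To pass from $g$ to $f$, I apply Birkhoff's theorem to $|f-g|\in L^1(X,\mu)$: for $\mu$-a.e.\ $x$ one has $\frac1n\sum_{i=0}^{n-1}|f-g|(T^ix)\to\int|f-g|\,d\mu\le\|f-g\|_2<\eta$, and on a compact set where this convergence is uniform (Egorov) the inequality $\bar f_n\le\bar g_n+\frac1n\sum_{i}|f-g|(T^ix)+\frac1n\sum_{i}|f-g|(T^iy)$ upgrades the estimate for $g$ to one for $f$.

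For $(2)\Rightarrow(1)$ I would argue by contraposition using the complexity criterion already in hand: by Theorem~\ref{0622}, $\mu$-mean equicontinuity of $f$ is the same as bounded complexity with respect to $\{\bar f_n\}$, so it suffices to show that $f_u\neq0$ forces $K(n,\ep,\bar f)$ to be unbounded in $n$ for some $\ep>0$. This is where the continuous spectral type of $f_u$ enters: the defining relation $\frac1n\sum_{i=0}^{n-1}|\langle U^if_u,f_u\rangle|^2\to0$ must be converted, via the product system, into a lower bound on the number of $\bar f_n$-balls needed to cover a fixed proportion of $X$, mirroring the non-equicontinuous alternative in the dichotomy of \cite{HLTXY}.

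Finally, for the link with $(3)$: applying Birkhoff's theorem to $T\times T$ and the observable $(x,y)\mapsto|f(x)-f(y)|$ shows that $\bar f_n(x,y)$ converges $\mu\times\mu$-a.e.\ to $f^\ast=\mathbb{E}[\,|f(x)-f(y)|\mid\mathcal{I}_{T\times T}]$, so the $\liminf$ in $(3)$ equals $f^\ast$ almost everywhere and $(3)$ asserts precisely that $f^\ast$ is bounded away from $0$. The main obstacle is to read off the pointwise size of $f^\ast$ from the splitting $f=f_c+f_u$ when $\mu$ is ergodic but $\mu\times\mu$ need not be $T\times T$-ergodic, so that $f^\ast$ is genuinely non-constant and cannot be identified with $\iint|f(x)-f(y)|\,d\mu\,d\mu$. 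My intended route is to disintegrate $\mu\times\mu$ over the Kronecker factor, on whose fibres $f_u$ behaves like a weakly mixing observable whose fibrewise averages of $|f_u(x)-f_u(y)|$ equidistribute to a positive constant; one then has to show these fibre constants are uniformly positive. Controlling this disintegration—ruling out a positive-measure set of fibres on which $f^\ast$ degenerates, and pinning down the exact interplay between $(3)$ and the contribution of the almost periodic part $f_c$—is the step I expect to require the most care.
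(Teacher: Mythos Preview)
The paper does not prove this proposition at all: it is quoted verbatim from \cite{GM15} (their Corollary~3.8) and then \emph{used} as a black box inside the proof of Theorem~\ref{0623}. So there is no ``paper's own proof'' to compare against, and your proposal is an attempt to reprove an external result.

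More importantly, the statement as transcribed in this paper contains an error that you have been fighting against without noticing. Condition~(3), $\mu$-mean expansivity, is \emph{not} equivalent to (1) and (2); it is their negation. A single nonconstant eigenfunction already shows this: if $Uf=\lambda f$ with $|\lambda|=1$, then $\bar f_n(x,y)=|f(x)-f(y)|$ for every $n$, so the set in (3) is $\{(x,y):|f(x)-f(y)|>\delta\}$, which never has full $\mu\times\mu$-measure. The way the paper actually invokes Proposition~\ref{0515}, in Claim~1 of Theorem~\ref{0623}, confirms this reading: it uses ``$f$ is \emph{not} almost periodic in $L^2(X,\mu_z)$ iff $f$ is $\mu_z$-mean expansive,'' i.e.\ $\neg(1)\Leftrightarrow(3)$. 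The intended statement is therefore a dichotomy (either (1)--(2) hold, or (3) holds), not a three-way equivalence. This is precisely why your analysis of (3) via the disintegration over the Kronecker factor ran into trouble: you were trying to show that $f^\ast$ is bounded away from~$0$ a.e.\ when $f_u=0$, which is simply false, and your honest remark that ``pinning down the exact interplay between (3) and the contribution of the almost periodic part $f_c$'' would be delicate was your instinct detecting the inconsistency.

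Your sketch of $(1)\Rightarrow(2)$ is sound and close in spirit to the argument the paper gives for the non-ergodic extension in Theorem~\ref{0623}. Your plan for $(2)\Rightarrow(1)$, however, is only a restatement: reducing to ``$f_u\neq 0$ forces unbounded $\bar f_n$-complexity'' is correct, but you give no mechanism for that implication beyond an appeal to \cite{HLTXY}, and the spectral relation $\frac1n\sum_i|\langle U^if_u,f_u\rangle|^2\to 0$ does not by itself produce a pointwise lower bound on $\bar f_n$ over many disjoint balls. In the ergodic case the clean route is exactly through the (corrected) dichotomy: if $f_u\neq 0$, show directly that the product observable $|f_u(x)-f_u(y)|$ has a.e.\ positive ergodic average on $(X\times X,\mu\times\mu)$, which yields (3) and hence $\neg(2)$ by an easy Fubini argument.
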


We use the same idea in \cite[Theorem 4.6]{HLTXY} and \cite[Proposition 4.1]{HWY} to prove the following theorem.

\begin{thm}\label{0623}
Let $(X,T)$ be a t.d.s.,
$\mu\in M(X,T)$ and $f\in L^2(X,\mu)$. Then the following statements are equivalent:
\begin{enumerate}

\item $f$ is an almost periodic function in $L^2(X,\mu)$.

\item $f$ is $\mu$-mean equicontinuous.

\end{enumerate}
\end{thm}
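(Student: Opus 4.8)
The plan is to prove Theorem \ref{0623} by reducing the non-ergodic case to the ergodic case (Proposition \ref{0515}) via the ergodic decomposition of $\mu$, while using the characterizations established in Theorem \ref{0622} to connect $\mu$-mean equicontinuity with a quantitative complexity condition that behaves well under disintegration. The two implications require genuinely different tools, so I would treat them separately.

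For the direction $(2)\Rightarrow(1)$, I would first invoke Theorem \ref{0622} to replace "$f$ is $\mu$-mean equicontinuous" with "$\mu$ has bounded complexity with respect to $\{\bar f_n\}$." Write the ergodic decomposition $\mu=\int_\Omega \mu_\omega\,\dd P(\omega)$, where each $\mu_\omega$ is ergodic. The key observation is that bounded complexity with respect to $\{\bar f_n\}$ for $\mu$ should force, for $P$-almost every $\omega$, that $f$ is $\mu_\omega$-mean equicontinuous (essentially because the covering number $K(n,\ep,\bar f)$ controls the fibered complexity, and a finite cover of $X$ restricts to a finite cover of almost every ergodic component). Once each $\mu_\omega$ gives an almost periodic $f$, by Proposition \ref{0515} applied fiberwise, $f$ lies in the closure of the span of eigenfunctions of $(X,\B,\mu_\omega,T)$ for a.e.\ $\omega$. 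The remaining task is to glue these: I would express $f$ as a limit of "measurable fields" of eigenfunctions and argue that $\overline{\{U^nf:n\in\Z\}}$ is compact in $L^2(X,\mu)=\int_\Omega L^2(\mu_\omega)\,\dd P$, using that each fiber is precompact with a uniform modulus coming from the uniform bound $C(\ep)$ on complexity.

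For the direction $(1)\Rightarrow(2)$, I would argue directly without decomposition. Suppose $f$ is almost periodic, so $K:=\overline{\{U^nf:n\in\Z\}}$ is compact in $L^2(X,\mu)$. Given $\ep>0$, cover $K$ by finitely many $L^2$-balls of radius $\eta$ (to be chosen) centered at $U^{n_1}f,\dots,U^{n_s}f$. The plan is to use these finitely many "time shifts" to build the required partition: for each multi-index recording which ball $U^nf$ falls into, group points $x$ by the approximate values of the vector $(f(T^{n_1}x),\dots,f(T^{n_s}x))$, and show that two points landing in the same cell have small $\limsup_n \bar f_n(x,y)$. The mechanism is that almost periodicity makes the Cesàro averages $\frac1n\sum_{i=0}^{n-1}|f(T^ix)-f(T^iy)|$ controllable by finitely many coordinates, mirroring how \cite[Theorem 4.6]{HLTXY} and \cite[Proposition 4.1]{HWY} pass from discrete spectrum to mean equicontinuity; I would import that argument with the metric $d$ replaced by the pseudo-metric $|f(\cdot)-f(\cdot)|$.

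\textbf{The main obstacle} will be the gluing step in $(2)\Rightarrow(1)$: almost periodicity is a global $L^2(X,\mu)$ statement, whereas the ergodic decomposition only delivers precompactness of the orbit of $f$ in each fiber $L^2(\mu_\omega)$ separately. Fiberwise precompactness does not automatically upgrade to precompactness of $\{U^nf\}$ in the direct-integral space unless one controls the precompactness \emph{uniformly} in $\omega$. This is exactly where the quantitative bounded-complexity hypothesis (rather than mere mean equicontinuity of each fiber) earns its keep: the single constant $C(\ep)$ valid for all $n$ provides a uniform total-boundedness modulus across the decomposition, letting me extract a finite $\ep$-net for $\{U^nf\}$ in $L^2(X,\mu)$ by a diagonal/measurable-selection argument. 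Making this uniformity rigorous — passing from the combinatorial covering number to an honest finite $\ep$-net of the orbit in the global Hilbert space — is the technical crux, and I expect it to consume the bulk of the proof.
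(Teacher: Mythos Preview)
Your overall architecture for $(2)\Rightarrow(1)$ is close to the paper's: ergodic decomposition, fiberwise almost periodicity via Proposition~\ref{0515}, then a gluing step. But the paper does \emph{not} push bounded complexity down to the fibers, nor does it use the constant $C(\ep)$ to produce a global $\ep$-net for $\{U^nf\}$. Instead it works with the $f$-mean equicontinuous compact set $M$ directly. For the fiberwise step (the paper's Claim~1) it argues by contradiction using the mean-expansivity characterization in Proposition~\ref{0515}. For the gluing (Claim~2) it takes a countable dense subset $\{z_m\}\subset M$, uses a diagonal argument to extract a subsequence $\{s_n\}$ along which $\{T^{s_n}f\}$ is Cauchy in every $L^2(\mu_{z_m})$, and then uses the $\delta$--$\ep$ uniformity of the mean-equicontinuous set $M$ (together with genericity, identifying $\int g\,\dd\mu_z$ with a Ces\`aro average along the orbit of $z$) to spread the Cauchy property from the dense $\{z_m\}$ to all $z\in M$. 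That topological-looking uniformity on $M$ is what replaces your hoped-for ``uniform complexity modulus''; your worry that fiberwise precompactness alone does not upgrade is correct, and this is how the paper circumvents it.

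For $(1)\Rightarrow(2)$ there is a genuine gap in your sketch. Knowing that each $U^nf$ lies within $\eta$ of some $U^{n_j}f$ in $L^2(\mu)$ gives, for each fixed $n$, a large set of $x$ with $|f(T^nx)-f(T^{n_j}x)|$ small, but the exceptional set depends on $n$; grouping points by the values $(f(T^{n_1}x),\dots,f(T^{n_s}x))$ does not, by itself, control $\bar f_n(x,y)$ for all $n$. The paper (following \cite{HLTXY,HWY}) avoids this by approximating $f$ in $L^2$ by a finite combination $\sum_k a_{t,k}h_k$ of \emph{eigenfunctions}. The crucial algebraic fact is $|h_k(T^ix)-h_k(T^iy)|=|\lambda_k^i|\cdot|h_k(x)-h_k(y)|=|h_k(x)-h_k(y)|$, which is independent of $i$; together with Lusin (to make the $h_k$ continuous on a large compact set) and a Markov-inequality control on the times when $T^ix$ leaves that set, this yields a single finite cover whose complexity bound holds for every $n$. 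Your $\ep$-net of the orbit does not carry this $i$-independence, and that is exactly the missing idea.
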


\begin{proof}
$(1)\Rightarrow(2)$  We will show that $\mu$ has bounded  complexity with respect to   $\{\bar{f}_n\}$, then by Theorem \ref{0622}, we know that $f$ is $\mu$-mean equicontinuous.

Put $\ep>0$, we will prove  there is $m\in \N$ such that $K(n ,\varepsilon, {\bar{f}} )\leq m$ for any $n\in \N$.
Let $\ep_t \ra 0$ when $t\ra \infty $ and $\overset{\infty}{\underset{ t=1}{\sum}} \ep_t^2<\ep^2/2$.
Since the collection of almost periodic function is spanned by the set of eigenfunctions of $L^2(X,\mu)$,
we can find eigenfunctions $\{ h_i\}_{i=1}^\infty$ of $L^2(X,\mu)$
and $a_{t,k}\in \mathbb{C}$ for $k=1,2,\cdots,K_t$ such that
\begin{align}\label{markov-ineq}
\|f-\sum_{k=1}^{K_t} a_{t,k} h_k\|_{L^2(X,\mu)}<\epsilon_t^2.
\end{align}

For $k\in \N$, there exist $\lambda_k \in \mathbb{C}$ with $|\lambda_k|=1$, and $X_k\in \mathcal{B}_X$ with $\mu(X_k)=1$,
$TX_k\subseteq X_k$ such that $h_k(Tx)=\lambda_k h_k(x)$ for all $x\in X_k$.
Put
$$Y_t=\{x\in X: |f(x)-\sum_{k=1}^{K_t} a_{t,k} h_k(x)|\ge \epsilon_t\}.$$
Then by  \eqref{markov-ineq}, $\mu(Y_t)<\epsilon_t^2$.
By Lusin's Theorem there exists a compact subset $A_t$ of $X\setminus Y_t$ such that $\mu(A_t)>1-\ep_t^2$ and  $h_k|_{A_t}$ is a continuous function
for $k=1,2,\cdots,K_t$. Clearly
\begin{align}\label{cha-ineq}
|f(x)-\sum_{k=1}^{K_t} a_{t,k} h_k(x)|< \ep_t
\end{align} for  $x\in A_t$.
By the continuity of $h_k|_{A_t}$, $k=1,2,\cdots,K_t$, there exists $\delta_t>0$ such that
when $x,y\in A_t$ with $d(x,y)<\delta_t$, one has
\begin{align}\label{cha-ineq1}
 \sum_{k=1}^{K_t} |a_{t,k}|\cdot |h_k(x)-h_k(y)|<\ep_t.
\end{align}

Set $X_0=(\overset{\infty}{\underset{ k=1}{\cap}} X_k)\cap (\overset{\infty}{\underset{ t=1}{\cap}} A_t)$. Then
$$\mu(X_0)>1-\overset{\infty}{\underset{ t=1}{\sum}} \epsilon_t^2>1-\frac{\ep^2}{2}.$$
By Lusin's theorem and the regularity of $\mu$ there is a compact set $A\subset X_0$ such that $\mu(A)>1-\frac{\ep^2}{2}$ and $f$ is continuous on $A$.

Since $f$ is continuous on $A$, there is $M\in \N$ such that $|f(x)|<M$ for $x\in A$.
Since $\ep_t \ra 0$ when $t\ra \infty $ and $\overset{\infty}{\underset{ t=1}{\sum}}  \ep_t^2<\ep^2/2$, there is $a\in \N$ such that $10M\ep_a<\ep$.
Since $A$ is compact, there exist $x_1,x_2,\cdots,x_m\in A$ such that
$\overset{m}{\underset{ r=1}{\cup}} B(x_r,\frac{\delta_a}{3})\supseteq A$.

 Next we show that $K(n ,\varepsilon, {\bar{f}} )=\min\#\{F\subset X  \colon \mu(\bigcup_{x\in F }B_{\bar{f}_n}(x,\varepsilon) )>1-\varepsilon\}\leq m$ for all  $n\in \mathbb{N}$.

To do this for  $x\in X$, let $E(x)=\{i\ge 0: T^ix\in A_a\}.$
Then for $n\in \mathbb{N}$, let
$$F_n=\{x\in X:\frac{\#(E(x)\cap [0,n-1])}{n}\le 1-\epsilon_a\}.$$

Then we have $\mu(F_n)<\ep_a$. Put $D_n=A\setminus F_n$. Then
$$\mu(D_n)>1-(\ep^2/2+\ep_a)>1-\epsilon$$
and for $z\in D_n$,
\begin{equation}\label{esti-ineq}
\frac{\#(E(z)\cap [0,n-1])}{n}> 1-\ep_a.
\end{equation}

Let $I_n=\{ r\in [1,m]: B_{\bar f_n}(x_r,\frac{\delta_a}{3})\cap D_n\neq \emptyset\}$.
For $r\in I_n$, we choose $y_r^n\in B_d(x_r,\frac{\delta_a}{3})\cap D_n$. Then
$$\bigcup_{r\in I_n} \big( B(y_r^n,\delta_a)\cap D_n\big) \supseteq \bigcup_{r\in I_n} \big( B(x_r,\frac{\delta_a}{3})\cap D_n\big)=D_n.$$
Fix $r\in I_n$, for any $x\in B(y_r^n,\delta_a)\cap D_n$,
\begin{align*}
&\hskip0.5cm \overline{f}_n(x,y_r^n)=\frac{1}{n}\sum_{i=0}^{n-1}|f(T^ix)-f(T^iy_r^n)|\\
&\le \frac{1}{n}\big(\sum_{i\in [0,n-1]\cap E(x)\cap E(y_r^n)}|f(T^ix)-f(T^iy_r^n)|
+2M\#\{[0,n-1]\setminus E(x)\}+2M\#\{[0,n-1]\setminus E(y_r^n)\}\big)\\
&< 4M\ep_a+\frac{1}{n}\sum_{i\in [0,n-1]\cap E(x)\cap E(y_r^n)}|f(T^ix)-f(T^iy_r^n)|
\end{align*}
where the last inequality follows from \eqref{esti-ineq}.

For $i\in [0,n-1]\cap E(x)\cap E(y_r^n)$,
\begin{align*}
&   |f(T^ix)-f(T^iy_r^n)|\leq|f(T^ix)-\sum_{k=1}^{K_t} a_{t,k} h_k(T^ix)|\\
&   +|f(T^iy_r^n)-\sum_{k=1}^{K_t} a_{t,k} h_k(T^iy_r^n)|+|\sum_{k=1}^{K_t} a_{t,k} (h_k(T^ix)-h_k(T^iy_r^n)|\\
&<2\ep_a+ |\sum_{k=1}^{K_t} a_{t,k} (h_k(T^ix)-h_k(T^iy_r^n)| \ \ \text{ (by \eqref{cha-ineq})}\\
&\le 2\ep_a+ \sum_{k=1}^{K_t} |a_{t,k}|\cdot|h_k(x)-h_k(y_r^n)|<3\ep_a.
\end{align*}
where the last inequality follows from \eqref{cha-ineq1}.

Combining the above two inequalities, one has
\begin{align*}
\overline{f}_n(x,y_r^n)< 4M\ep_a+\frac{1}{n}\sum_{i\in [0,n-1]\cap E(x)\cap E(y_r^n)}|f(T^ix)-f(T^iy_r^n)|\leq (4M+3)\ep_a
\le \epsilon.
\end{align*}
This implies $B(y_r^n,\delta_a)\cap D_n\subseteq B_{\overline{f}_n}(y_r^n,\epsilon)$ for $r\in I_n$.

Summing up $\bigcup_{r\in I_n} B_{\overline{f}_n}(y_r^n,\epsilon)\supseteq \bigcup_{r\in I_n}B(y_r^n,\delta_a)\cap D_n=D_n$ and
$\mu(D_n)>1-\epsilon$. Hence
$K(n ,\varepsilon, {\bar{f}} )\le |I_n|\le m$.

$(2)\Rightarrow(1)$
Let $G$ be the collection of points $z\in X$
which are generic to some ergodic measure,
that is,
for each $z\in G$, $\frac{1}{n}\sum_{i=0}^{n-1}\delta_{T^iz}\to \mu_z$
as $n\to\infty$
and $\mu_z$ is ergodic.
Then $G$ is measurable and $\mu(G)=1$.
We first prove the following Claim.

\medskip
\noindent\textbf{Claim 1}: $f$ is an almost periodic function in  $L^2(X,\mu_z)$ for $\mu$-a.e.\ $z\in G$.
\begin{proof}[Proof of the Claim 1]
Let $G_1=\{z\in G\colon$ $f\in  L^2(X,\mu_z)$ is not an almost periodic function in  $ L^2(X,\mu_z)$ $\}$ and $H=\{z\in G: f\not \in L^2(X,\mu_z)\}$.
We need to prove that $G_1\cup H$ is measurable and has zero $\mu$-measure.
The ergodic decomposition of $\mu$ can be expressed as
 $\mu=\int_G \mu_z \dd\mu(z)$ (see e.g.\ \cite[Theorem 6.4]{M87}), thus $f\in L^2(X,\mu_z)$ for $\mu$-a.e.\ $z\in G$, $\mu(H)=0$.
 We will show $\mu(G_1)=0$ in the following.
For $k\in\N$ and $z\in G$, put
\[ F_k(z)=
\mu_z\times\mu_z\biggl(\biggl\{(x,y)\in X\times X\colon
\liminf_{n\to\infty}\frac{1}{n}\sum_{i=0}^{n-1} |f(T^ix)-f(T^iy)|>\frac{1}{k}
\biggr\}\biggr).\]
As $\int_G \mu_z \times \mu_z d\mu(z)$ is an invariant measure on $(X\times X,T\times T)$,
for each $k\in\N$, $F_k$ is a measurable function on $G$.
By Proposition~\ref{0515},
we know that
$f$ is not an almost periodic function in $L^2(X,\mu_z)$ if and only if there exists
$k\in\N$ such that $F_k(z)=1$.
Then $G_1= \bigcup_{k\in\N}\{z\in G\setminus H\colon F_k(z)=1\}$
and it is measurable.
Now it is sufficient to prove $\mu(G_1)=0$.
If not, then $\mu(G_1)>0$ and there exists $k\in\N$
such that $\mu(\{z\in G\setminus H\colon F_k(z)=1\})>0$.
Let $G_2=\{z\in G\setminus H\colon F_k(z)=1\}$ and put $r=\mu(G_2)$. Then for every $z\in G_2$ and
for $\mu_z\times\mu_z$-a.e.\ $(x,y)\in X\times X$,
\begin{equation}
\liminf_{n\to\infty}\frac{1}{n}\sum_{i=0}^{n-1}  |f(T^ix)-f(T^iy)|>\frac{1}{k}.
\label{jian-sen-33}
\end{equation}

Since $f$ is $\mu$-mean equicontinuous,
 there exists compact set $M\subset X$ with $\mu(M)>1-\frac{r^2}{4}$
such that $M$ is $f$-mean equicontinuous set.
By regularity of $\mu$, we can assume $M\subset G\setminus H$.
Let $G_3=\{z\in G\setminus H\colon \mu_z(M)>1-\frac{r}{2}\}$.
Then $G_3$ is measurable, as $\mu=\int_G \mu_z \dd\mu(z)$ is the ergodic decomposition of $\mu$.
We have
\begin{align*}
1-\frac{r^2}{4}<\mu(M)=\int_{G\setminus H} \mu_z(M)\dd\mu(z)
&\leq \int_{G_3} \mu_z(M)\dd\mu(z) +\int_{G\setminus (G_3\cup H)} \mu_z(M)\dd\mu(z) \\
&\leq \mu(G_3)+ (1-\mu(G_3))(1-\frac{r}{2}),
\end{align*}
which implies that $\mu(G_3)>1-\frac{r}{2}$.
Then $\mu(G_2\cap G_3)>r+(1-\frac{r}{2})-1=\frac{r}{2}>0$.
Pick $z\in G_2\cap G_3$. As  $M$ is an $f$-mean equicontinuous set,
there exists a $\delta>0$ such that
for any $x,y\in M$ with $d(x,y)<\delta$,
\[
\limsup_{n\to\infty}\frac{1}{n}\sum_{i=0}^{n-1} |f(T^ix)-f(T^iy)|<\frac{1}{k}.
\]
As $M$ is compact, there exists a finite open cover
$\{U_1,U_2,\dotsc, U_m\}$ of $M$ with diameter less than $\delta$.
Since $z\in G_3$, $\mu_z(M)>1-\frac{r}{2}$.
Then there exists $i\in \{1,\dotsc,m\}$ such that $\mu_z(U_i)>0$
and also $\mu_z\times\mu_z(U_i\times U_i)>0$.
Note that the diameter of $U_i$ is less than $\delta$, so
for any $x,y\in U_i$,
\[
\limsup_{n\to\infty}\frac{1}{n}\sum_{i=0}^{n-1} |f(T^ix)-f(T^iy)|<\frac{1}{k},
\]
which contradicts to (\ref{jian-sen-33}). This ends the proof of Claim 1.
\end{proof}

Let
$$G_0=\{z\in G\colon  f\ \text{ is an almost periodic function in }L^2(X,\mu_z)\}.$$
By Claim 1, we have $\mu(G_0)=1$.

\medskip
\noindent\textbf{Claim 2}:
For any $\tau>0$,
there exists $M^*\in\mathcal{B}_X$ with $\mu(M^*)>1-\tau$
such that $\{T^{n}(f\cdot \mathbf{1}_{M^*})\colon n\in\mathbb{Z}\}$  is precompact in $L^2(X,\mu)$.
\begin{proof}[Proof of the Claim 2]

Fix a constant $\tau>0$. Since $f$ is $\mu$-mean equicontinuous,
there exists compact set $M\subset X$ with $\mu(M)>1-\tau$
such that $M$ is an $f$-mean equicontinuous set. By Lusin's theorem, we can assume that $f$ is continuous on $M$.
Let $M^*=\bigcup_{n\in\mathbb{Z}} T^{-n}M$.
To show that $\{T^{n}(f\cdot \mathbf{1}_{M^*})\colon n\in\mathbb{Z}\}$  is precompact in $L^2(X,\mu)$,
we only need to prove for any sequence $\{t_n\}$ in $\mathbb{Z}$
there exists a subsequence $\{s_n\}$ of $\{t_n\}$  such that
$\{T^{s_n}(f\cdot \mathbf{1}_{M^*})\}$  is a Cauchy sequence in $L^2(X,\mu)$.

\medskip
By regularity of $\mu$, we can assume that $M\subset G_0$.
Choose a countable dense subset $\{z_m\}$ in $M$.
As $f$ is an almost periodic function in  $L^2(X,\mu_{z_1})$, there exists a subsequence $\{t_{n,1}\}$ of $\{t_n\}$ such that
$\{T^{t_{n,1}}f\colon n\in\N \}$ is a Cauchy sequence in $L^2(X,\mu_{z_1})$. Inductively assume that
for each $i\le m-1$ we have defined $\{t_{n,i}\}$ (which is a subsequence of $\{t_{n,{i-1}}\}$) such that
$\{T^{t_{n,i}}f\colon n\in\N \}$ is a Cauchy sequence in $L^2(X,\mu_{z_i})$.
As $f$ is an almost periodic function in  $L^2(X,\mu_{z_m})$,
there exists a subsequence $\{t_{n,m}\}$ of $\{t_{n,m-1}\}$ such that
$\{T^{t_{n,m}}f\colon n\in \N \}$ is a
Cauchy sequence in $L^2(X,\mu_{z_m})$.
Let $s_n=t_{n,n}$ for $n\geq 1$.
By the usual diagonal procedure,
$\{T^{s_n}f\colon n\in \N \}$ is a
Cauchy sequence in $L^2(X,\mu_{z_m})$ for all $m\geq 1$.

\medskip

Fix $\ep>0$. As $M$ is an $f$-mean equicontinuous set and $f$ is continuous on $M$,
there exists $\delta>0$ such that for any $x,y\in M$ with $d(x,y)<\delta$,
\[\limsup_{n\to\infty}\frac{1}{n}\sum_{i=0}^{n-1} |f(T^ix)-f(T^iy)|^2<\ep.\]

Since $M$ is compact and $\{z_m\}$ is dense in $M$, there exists finite set $\{z_{a_1},\cdots,z_{a_l}\}$
such that $\underset{h=1}{\overset{l}{\cup}}B(z_{a_h},\delta)\supset M$. Without loss of generality, assume $z_{a_h}=z_h$ for all $1\leq h\leq l$.
As $\{T^{s_n}f\colon n\in \N \}$ is a
Cauchy sequence in $L^2(X,\mu_{z_{h}})$ for $1\leq h\leq l$, there exists $N_{\ep}\in \N$
such that $\Vert T^{s_j}f-T^{s_k}f\Vert_{L^2(X,\mu_{z_{h}})}^2\leq \ep$ for any $j,k\geq N_{\ep}$ and $1\leq h\leq l$.

For any $z\in M$, there exists $1\leq h\leq l$ such that $d(z,z_h)<\delta$.
For any $j, k\geq N_{\ep}$,
\begin{align*}
\Vert T^{s_j}f-T^{s_k}f\Vert_{L^2(X,\mu_z)}^2&
=\int_X |T^{s_j}f-T^{s_k}f|^2 \dd\mu_z
=\lim_{n\to\infty}\frac{1}{n}\sum_{i=0}^{n-1}|f(T^{s_j+i}z)-f(T^{s_k+i}z)|^2\\
&\leq  \limsup_{n\to\infty}\frac{1}{n}\sum_{i=0}^{n-1}|f(T^{s_j+i}z)-f(T^{s_j+i}z_h)|^2\\
&+\limsup_{n\to\infty}\frac{1}{n}\sum_{i=0}^{n-1}|f(T^{s_k+i}z)-f(T^{s_k+i}z_h)|^2\\
&+\lim_{n\to\infty}\frac{1}{n}\sum_{i=0}^{n-1}|f(T^{s_j+i}z_h)-f(T^{s_k+i}z_h)|^2\\
&\leq 2\ep + \Vert T^{s_j}f-T^{s_k}f\Vert_{L^2(X,\mu_{z_h})}^2 \leq 3\ep.
\end{align*}
Thus $\{T^{s_n}f\colon n\in \N \}$ is a
Cauchy sequence in $L^2(X,\mu_{z})$.
Then for each $z\in M$,
\[ \lim_{N\to\infty}\sup_{j,k\geq N} \int_X |T^{s_j}f-T^{s_k}f|^2 \dd\mu_z =0. \]
For each $y\in M^*$, there exists $n\in\mathbb{Z}$ and $z\in M$
such that $T^nz=y$.
Then $\mu_z=\mu_y$.
For $z\in  M^*$, put
\[f_N(z)=\sup_{j,k\geq N} \int_X |T^{s_j}f-T^{s_k}f|^2 \dd\mu_z.\]
Since for any $\ep$, there exists $N_{\ep}\in \N$ such that $\Vert T^{s_j}f-T^{s_k}f\Vert_{L^2(X,\mu_{z})}^2\leq 3\ep$
for any $z\in M^*$ and any $j,k\geq N_{\ep}$,
we have $f_N(z)\leq 3\ep$ for any $z\in M^*$ and any $N\geq N_{\ep}$.
 By the dominated convergence theorem,
\begin{equation}\label{dmt}
\lim_{N\to\infty}\int_{M^*}f_N(z) \dd\mu(z)=\int_{M^*}\lim_{N\to\infty} f_N(x)\dd\mu(z)=0.
\end{equation}
It is easy to see that
\begin{align*}
\sup_{j,k\geq N}  \int_{M^*} \int_X |T^{s_j}f-T^{s_k}f|^2 \dd\mu_z \dd\mu(z)&
\le  \int_{M^*} \biggl(\sup_{j,k\geq N} \int_X |T^{s_j}f-T^{s_k}f|^2 \dd\mu_z\biggr) \dd\mu(z)\\
&=\int_{M^*}f_N(z) \dd\mu(z)
\end{align*}
which deduces
 \[ \lim_{N\to\infty}
\biggl(\sup_{j,k\geq N}  \int_{M^*} \int_X |T^{s_j}f-T^{s_k}f|^2 \dd\mu_z \dd\mu(z)\biggr)=0\ \  \ \text{by}\ (\ref{dmt}).\]
As $\int_{M^*}g \dd \mu=\int_{M^*}(\int g \dd\mu_z) \dd\mu(z)$ for any $g\in L^2(X,\mu)$ we have
 \[ \lim_{N\to\infty}
\biggl(\sup_{j,k\geq N}  \int_{M^*}  |T^{s_j}f-T^{s_k}f|^2 \dd\mu\biggr)=0. \]
Note that $T(M^*)=M^*$, so
\[\int_{M^*} |T^{s_j}f-T^{s_k}f|^2 \dd\mu=
\int |T^{s_j}(f\cdot \mathbf{1}_{M^*}) -T^{s_k}(f\cdot \mathbf{1}_{M^*})|^2 \dd\mu.\]
Thus $\{T^{s_n}(f\cdot \mathbf{1}_{M^*})\}$  is a Cauchy sequence in $L^2(X,\mu)$,
which ends the proof of Claim 2.
\end{proof}

The collection of functions $g$ such that $\{T^n g\colon n\in\mathbb{Z}\}$ is
precompact in  $L^2(X,\mu)$ is closed in $L^2(X,\mu)$.
As the measure of ${M^*}$ in Claim 2 can be arbitrarily close to $1$,
$\{T^n f\colon n\in\mathbb{Z}\}$ is also precompact in  $L^2(X,\mu)$.

\end{proof}

\begin{cor}
	Let $(X,\B,\mu,T)$ be an invertible measure preserving system where $(X,\mathcal{B},\mu)$ is a Lebesgue space, $f\in L^2(X,\mu)$. Then the following statements are equivalent:
\begin{enumerate}

\item $f$ is an almost periodic function in $L^2(X,\mu)$.

\item $f$ is $\mu$-mean equicontinuous.

\item $\mu$ has bounded  complexity
with respect to   $\{\bar{f}_n\}$.

\end{enumerate}
\end{cor}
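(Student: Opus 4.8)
The plan is to reduce everything to the topological case already settled in Theorems \ref{0622} and \ref{0623}, using the realization theorem. By Theorem \ref{0625} there is a t.d.s. $(Y,S)$ together with a measure-theoretic isomorphism $\pi\colon (Y,\B_Y,\nu,S)\ra (X,\B,\mu,T)$, where $\nu$ is $S$-invariant. Set $\tilde f=f\circ\pi\in L^2(Y,\nu)$. Applied to $(Y,S,\nu)$, the three conditions for $\tilde f$ are already known to be equivalent: Theorem \ref{0623} gives that $\tilde f$ is an almost periodic function in $L^2(Y,\nu)$ if and only if $\tilde f$ is $\nu$-mean equicontinuous, and Theorem \ref{0622} gives that $\tilde f$ is $\nu$-mean equicontinuous if and only if $\nu$ has bounded complexity with respect to $\{\bar{\tilde f}_n\}$. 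Thus it suffices to show that each of the three conditions $(1)$, $(2)$, $(3)$ for $(X,\mu,T,f)$ is equivalent to its counterpart for $(Y,\nu,S,\tilde f)$.

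First I would fix $S$-invariant and $T$-invariant conull sets $Y_0\subseteq Y$ and $X_0\subseteq X$ on which $\pi$ restricts to an equivariant Borel bijection $Y_0\ra X_0$; all transfers are carried out inside these sets, which takes care of the fact that $\pi$ is only defined mod $0$. For condition $(1)$: the map $V\colon g\mapsto g\circ\pi$ is a Hilbert space isomorphism $L^2(X,\mu)\ra L^2(Y,\nu)$ satisfying $V U_T=U_S V$, so $\mathrm{cl}\{U_T^n f\colon n\in\Z\}$ is compact in $L^2(X,\mu)$ if and only if $\mathrm{cl}\{U_S^n\tilde f\colon n\in\Z\}$ is compact in $L^2(Y,\nu)$. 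For condition $(2)$: given witnessing sets $A_1,\dots,A_k\subseteq X$ for $\mu$-mean equicontinuity, the sets $B_i=\pi^{-1}(A_i)\cap Y_0$ satisfy $\nu(\bigcup_i B_i)=\mu(\bigcup_i A_i)$, and for $y,y'\in B_i$ one has $\frac1n\sum_{j=0}^{n-1}|\tilde f(S^jy)-\tilde f(S^jy')|=\frac1n\sum_{j=0}^{n-1}|f(T^j\pi y)-f(T^j\pi y')|$, so the $\limsup$ condition passes verbatim between the two systems; the reverse direction is identical.

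The hard part will be condition $(3)$, since the complexity $K(n,\ep,\bar f)$ is defined by minimizing over finite sets of \emph{centers} in $X$, and an optimal family of centers on one side need not lie in the conull set on which $\pi$ is a bijection. Because $\bar f_n(\pi y,\pi y')=\bar{\tilde f}_n(y,y')$ for $y,y'\in Y_0$, the balls $B_{\bar f_n}(\pi y,\ep)$ and $B_{\bar{\tilde f}_n}(y,\ep)$ correspond under $\pi$ once the centers lie in $Y_0$. To handle a center $y\notin Y_0$ I would recenter: using that each $\bar{\tilde f}_n$ is a pseudo-metric, pick any $y'\in B_{\bar{\tilde f}_n}(y,\ep)\cap Y_0$ (discarding the ball if this intersection is empty) and note $B_{\bar{\tilde f}_n}(y,\ep)\cap Y_0\subseteq B_{\bar{\tilde f}_n}(y',2\ep)$ by the triangle inequality. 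Pushing the recentred balls forward by $\pi$ then shows that a bound $K(n,\ep,\bar{\tilde f})\le C$ for all $n$ yields $K(n,2\ep,\bar f)\le C$ for all $n$ (the measure covered stays above $1-\ep>1-2\ep$), and symmetrically in the other direction. Since bounded complexity is required to hold for every $\ep>0$, the factor $2$ in the radius is immaterial, so $(3)$ for $\mu$ is equivalent to $(3)$ for $\nu$. Combining the three transfers with the equivalences on $(Y,S,\nu)$ closes the cycle $(1)\Leftrightarrow(2)\Leftrightarrow(3)$.
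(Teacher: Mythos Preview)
Your approach is correct and is essentially the same as the paper's: realize the m.p.s.\ as a topological model via Theorem~\ref{0625}, invoke Theorems~\ref{0622} and~\ref{0623} there, and transfer each condition through the isomorphism. The paper is considerably more terse (writing the transfer of each condition as a one-liner), whereas you spell out the recentering argument for condition~(3); this extra care is warranted, since the paper's bare assertion that bounded complexity passes through $\pi$ does hide exactly the center-relocation issue you address.
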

\begin{proof} Since $(X,\mathcal{B},\mu)$ is a Lebesgue space, by Theorem \ref{0625} there is a t.d.s. $(Y,S)$
with $\nu$ an $S$-invariant Borel probability measure such that $\pi: (X,\B,\mu,T)\ra (Y,\B_Y,\nu,S)$ is measure-theoretically isomorphic.

$(2)\Rightarrow(1)$ Then $f\pi^{-1}\in  L^2(Y,\nu)$ is $\nu$-mean equicontinuous.
By theorem \ref{0623} $f\pi^{-1}$ is an almost periodic function in $L^2(Y,\nu)$. So $f$ is an almost periodic function in $L^2(X,\mu)$.

$(1)\Rightarrow(2)$ just the same as $(2)\Rightarrow(1)$.

$(2)\Rightarrow(3)$ Let $g=f\pi^{-1}$, then $g\in  L^2(Y,\nu)$ is $\nu$-mean equicontinuous.
By Theorem \ref{0622} $\nu$ has bounded  complexity
with respect to   $\{\bar{g}_n\}$, so $\mu$ has bounded  complexity
with respect to   $\{\bar{f}_n\}$.

$(3)\Rightarrow(2)$ just the same as $(2)\Rightarrow(3)$.
\end{proof}

We will show the relationship between measurable mean equicontinuity and measurable mean equicontinuous functions
which generalized Garc\'ia-Ramos and Marcus's result \cite[Theorem 3.9]{GM15} from ergodic measure cases to invariant measure cases.

\begin{thm}\label{equi001}Let $(X,T)$ be a t.d.s. with a metric $d$ and $\mu\in M(X,T)$. Then the following statements are equivalent:
\begin{enumerate}

\item $(X,T)$ is $\mu$-mean equicontinuous.

\item For any $f\in C(X)$, $f$ is $\mu$-mean equicontinuous.

\item For any $f\in L^2(X,\mu)$, $f$ is $\mu$-mean equicontinuous.

\item For any $B\in \B_X$, $1_B$ is $\mu$-mean equicontinuous.
\end{enumerate}
\end{thm}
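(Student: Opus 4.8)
The plan is to prove a cycle of implications. The implications $(3)\Rightarrow(2)$, $(3)\Rightarrow(4)$ are immediate since $C(X)\subset L^2(X,\mu)$ and indicator functions $1_B$ lie in $L^2(X,\mu)$. The substantive work is to establish $(1)\Rightarrow(3)$ and then to close the loop by proving $(2)\Rightarrow(1)$ and $(4)\Rightarrow(1)$; these last two reduce the equicontinuity of the whole system to the equicontinuity of a suitable family of functions.

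For $(1)\Rightarrow(3)$, assume $(X,T)$ is $\mu$-mean equicontinuous, and fix $f\in L^2(X,\mu)$. The goal is to show $f$ is $\mu$-mean equicontinuous as a function, which by Proposition~\ref{0620} amounts to producing, for each $\ep>0$, finitely many measurable sets covering all but $\ep$ of the space on which the time-averaged oscillation of $f$ is small. By Theorem~\ref{0624}, the system being $\mu$-mean equicontinuous is equivalent to $\mu$-equicontinuity in the mean, so there is a compact set $K$ with $\mu(K)>1-\tau$ that is equicontinuous in the mean: for every $\eta>0$ there is $\delta>0$ with $\frac1n\sum_{i=0}^{n-1}d(T^ix,T^iy)<\eta$ for all $n$ whenever $x,y\in K$, $d(x,y)<\delta$. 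The difficulty is passing from smallness of the orbit distance $\bar d_n$ to smallness of $\bar f_n$ for a merely $L^2$ function $f$, which need not be continuous. I would handle this by using Lusin's theorem to find a compact $A\subset K$ with $\mu(A)>1-\tau$ on which $f$ is continuous and bounded, then controlling the orbit averages by splitting the times $i\in[0,n-1]$ into those for which $T^ix,T^iy$ both lie in $A$ (where continuity of $f$ gives a small contribution, after first making $\sum d(T^ix,T^iy)$ small) and the remaining times, whose density is small because the orbit spends most of its time near $A$ by the ergodic theorem applied to $1_A$. This is exactly the estimation scheme already used in the proof of Theorem~\ref{0623}, $(1)\Rightarrow(2)$, where the $4M\ep_a$ error term bounds the bad-time contribution and the continuity of $f$ on $A$ bounds the good-time contribution.

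For $(2)\Rightarrow(1)$ I would exploit that the metric $d$ can be recovered from countably many continuous functions. Choose a countable family $\{f_j\}\subset C(X)$ that separates points and generates the topology (for instance, $f_j=d(\cdot,x_j)$ for a dense sequence $\{x_j\}$), normalized so that $\sum_j 2^{-j}\|f_j\|_\infty<\infty$. Given $\tau>0$, apply the hypothesis to each $f_j$ with a summable error budget $\tau/2^{j+1}$ to obtain compact $f_j$-mean equicontinuous sets $K_j$ with $\mu(K_j)>1-\tau/2^{j+1}$, and set $K=\bigcap_j K_j$, so $\mu(K)>1-\tau$. On $K$ the time averages $\bar f_{j,n}$ are simultaneously controllable; since $\bar d_n(x,y)\le \sum_j 2^{-j}\bar f_{j,n}(x,y)$ up to the chosen normalization, and the tail of this series is uniformly small while each finite head is made small on $K$ via the $f_j$-mean equicontinuity, one deduces that $K$ is a mean equicontinuous set, giving $\mu$-mean equicontinuity of $(X,T)$. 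The main obstacle here is the uniformity: one must ensure a single $\delta$ works by first truncating the sum at a finite $J$ so the tail contributes less than $\ep/2$, then choosing $\delta$ to handle $f_1,\dots,f_J$ jointly on $K$.

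Finally, $(4)\Rightarrow(1)$ follows the same template but with indicator functions. The key point is that finite combinations of indicators $1_B$ can approximate the continuous separating functions $f_j$ in $L^2$, or more directly, one can build the metric from indicators of a refining sequence of partitions by balls; this is essentially the content of Theorem~\ref{0624}, where conditions $(4)$ and $(5)$ phrased via Hamming distances of partition names — which are precisely time averages of indicator functions of the form $1_{A_j\times A_j}$ — were shown equivalent to $\mu$-mean equicontinuity. I expect $(1)\Rightarrow(3)$ to be the main obstacle, as it is the one genuinely new estimate requiring the Lusin-plus-ergodic-theorem argument for an arbitrary $L^2$ function; the reverse implications $(2)\Rightarrow(1)$ and $(4)\Rightarrow(1)$ are conceptually routine reductions once the separating-family observation is in place.
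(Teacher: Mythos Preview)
Your overall cycle is sound, but the route differs from the paper's in a way worth flagging. The paper does \emph{not} prove $(1)\Rightarrow(3)$ directly; instead it proves the easy $(1)\Rightarrow(2)$ (for continuous $f$ the uniform continuity on $X$ immediately turns a mean-equicontinuous set into an $f$-mean-equicontinuous set), and then gets $(2)\Rightarrow(3)$ and $(4)\Rightarrow(3)$ in one line each by invoking Theorem~\ref{0623}: $\mu$-mean equicontinuity of $f$ is equivalent to $f$ being almost periodic, the almost periodic functions form a closed subspace of $L^2(X,\mu)$, and both $C(X)$ and $\spann\{1_B:B\in\B_X\}$ are dense there. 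This buys a great deal of economy, since the hard analysis has already been done in Theorem~\ref{0623}. Your direct $(1)\Rightarrow(3)$ is more self-contained, but be aware that the scheme you cite from Theorem~\ref{0623}$(1)\Rightarrow(2)$ uses eigenfunction equivariance ($|h_k(T^ix)-h_k(T^iy)|=|h_k(x)-h_k(y)|$), not system equicontinuity; you must replace that by a Chebyshev argument on $\bar d_n$ to control the density of times with $d(T^ix,T^iy)\ge\theta$, and you should first reduce to bounded $f$ (then pass to $L^2$-limits, which does work directly from the definition via the ergodic theorem applied to $|f-f_k|$).

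There is one genuine gap in your $(2)\Rightarrow(1)$. The inequality $\bar d_n(x,y)\le\sum_j 2^{-j}\bar f_{j,n}(x,y)$ is false for the given metric $d$; what you actually control is $\bar\rho_n$ for the new compatible metric $\rho(x,y)=\sum_j 2^{-j}|f_j(x)-f_j(y)|$. So you obtain $\mu$-mean equicontinuity with respect to $\rho$, not $d$. The paper closes this gap by citing the metric-independence of $\mu$-mean equicontinuity (equivalently, its equivalence with discrete spectrum) from \cite{HLTXY}; you need to insert the same step. Your $(4)\Rightarrow(1)$ via Theorem~\ref{0624} is fine once you note $2H_n^\alpha(x,y)=\sum_{j=1}^l \overline{(1_{A_j})}_n(x,y)$.
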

\begin{proof}$(1)\Rightarrow(2)$ For any $\tau>0$, there is a compact set $M\subset X$ with $\mu(M)>1-\tau$ and $M$ is mean equicontinuous set.

Fix $f\in C(X)$, for any $\eta$ there exists $\theta>0$
such that $d(u,v)<\theta$ implies $|f(u)-f(v)|<\eta$.
Since $M$ is a mean equicontinuous set, there is $\delta>0$ whenever $d(x,y)<\delta$ with $x,y\in M$,
then we have $$\underline{D}\{n:d(T^nx,T^ny)< \theta\}\geq 1-\theta.$$
Then for $x,y\in M$ with $d(x,y)<\delta$, we have
 $$\underline{D}\{n:|f(T^nu)-f(T^nv)|< \eta\}\geq 1-\theta.$$
 Thus $M$ is $f$-mean equicontinuous set, and $f$ is $\mu$-mean equicontinuous.

$(2)\Rightarrow(1)$ Choose a countable set $\{f_n\}$ in $C(X)$ with $\underset{x\in X}{\max} |f_n(x)| \leq 1$ and span$\{f_n\}$ is dense in $C(X)$.
Define $$\rho(x,y)=\sum_{n=1}^\infty\frac{|f_n(x)-f_n(y)|}{2^n}.$$
Then $\rho$ is a compatible metric on $(X,T)$.
For any $\tau>0$ and  any $n\in \N$, there exists compact set $M_n$ such that $\mu(M_n)>1-\frac{\tau}{2^{n}}$ and $M_n$ is an $f_n$-mean equicontinuous set.
Let $M=\overset{\infty}{\underset{ n=1}{\cap}}M_n$, then $\mu(M)>1-\tau$ and for any $n\in \N$, $M$ is an $f_n$-mean equicontinuous set.

Next we show that $M$ is a mean equicontinuous set. For any $\ep>0$
 choose $N\in \N$ so that $\overset{\infty}{\underset{ n=N+1}{\sum}} \frac{2}{2^n}\leq \frac{\ep}{2}$. Since $M$ is an $f_n$-mean equicontinuous set,
 there exists $\delta>0$ such that whenever $d(x,y)<\delta$ with $x,y\in M$,
 $$ \limsup_{m\to\infty}\frac{1}{m}\sum_{i=0}^{m-1}|f_n(T^ix)-f_n(T^iy)|<\frac{\ep}{2}$$
 for $n\leq N$. Then
 \begin{align*}
&\hskip0.5cm \frac{1}{m}\sum_{i=0}^{m-1}|\rho(T^ix,T^iy)|\\
&\leq \frac{1}{m}\sum_{i=0}^{m-1}(\sum_{n=1}^N\frac{|f_n(T^ix)-f_n(T^iy)|}{2^n}+\frac{\ep}{2})\\
&=\sum_{n=1}^N\frac{1}{2^n}(\frac{1}{m}\sum_{i=0}^{m-1}|f_n(T^ix)-f_n(T^iy)|)+\frac{\ep}{2}
\end{align*}

 so $$\limsup_{m\to\infty}\frac{1}{m}\sum_{i=0}^{m-1}\rho(T^ix,T^iy)\leq \sum_{n=1}^N\frac{1}{2^n}\limsup_{m\to\infty}(\frac{1}{m}\sum_{i=0}^{m-1}|f_n(T^ix)-f_n(T^iy)|)+\frac{\ep}{2}$$
 $$\leq \sum_{n=1}^N\frac{1}{2^n}(\frac{\ep}{2})+\frac{\ep}{2}\leq \ep$$
Thus $M$ is a mean equicontinuous set and $(X,T)$ is $\mu$-mean equicontinuous with the metric $\rho$.
By \cite[Theorem 4.2, 4.3, 4.6]{HLTXY}, we know that $(X,T)$ is $\mu$-mean equicontinuous if and only if $(X,T)$ has discrete spectrum, which does not rely on the metric.
So $(X,T)$ is $\mu$-mean equicontinuous with metric $d$.

 $(3)\Rightarrow(4)$ and $(3)\Rightarrow(2 )$  are obvious.

 $(2)\Rightarrow(3)$
By Theorem \ref{0623}, $f$ is $\mu$-mean equicontinuous if and only if $f$ is an almost periodic function in $L^2(X,\mu)$.
Since $C(X)$ is dense in $L^2(X,\mu)$ and almost periodic functions are closed in $L^2(X,\mu)$,
 for any $f\in L^2(X,\mu)$, $f$ is $\mu$-mean equicontinuous.

  $(4)\Rightarrow(3)$   By Theorem \ref{0623}, $f$ is $\mu$-mean equicontinuous if and only if $f$ is an almost periodic function in $L^2(X,\mu)$.
  Since span$\{1_B:B\in \B\}$ is dense in $L^2(X,\mu)$ and almost periodic functions are closed in $L^2(X,\mu)$,
   for any $f\in L^2(X,\mu)$, $f$ is $\mu$-mean equicontinuous.
\end{proof}
\end{defn}


\begin{thebibliography}{99}
\bibitem{A59} J. Auslander, \emph{Mean-$L$-stable systems},
Illinois J. Math., {\bf 3} (1959), 566--579.

\bibitem{AY80} J. Auslander and J. A. Yorke, \emph{Interval maps, factors of maps, and chaos},
Tohoku Math. J., \textbf{32} (1980), no. 2, 177--188.


\bibitem{BHM} F. Blanchard, B. Host and A. Maass, \textit{Topological complexity}, Ergodic Theory Dynam. Systems,
 \textbf{20} (2000),  no. 3, 641--662.


\bibitem{DG16}
T. Downarowicz and E. Glasner, \textit{Isomorphic extensions and applications}, Topol. Methods Nonlinear Anal., \textbf{48} (2016), 321--338.


\bibitem{Fer} S. Ferenczi, \textit{Measure-theoretic complexity of ergodic systems},
Israel J. Math., \textbf{100} (1997), no.~1, 189--207.


\bibitem{F51} S. Fomin, \textit{On dynamical systems with a purely point spectrum}, Dokl. Akad. Nauk SSSR,
vol. {\bf 77} (1951), 29--32 (In Russian).

\bibitem{F81} H. Furstenberg, \emph{Recurrence in Ergodic Theory and Combinatorial Number Theory},
Princeton Univ. Press, Princeton, NJ, 1981.

\bibitem{Felipe}F. Garc\'ia-Ramos,
\textit{Weak forms of topological and measure theoretical equicontinuity: relationships with
discrete spectrum and sequence entropy},
Ergodic Theory Dynam. Systems, \textbf{37} (2017), 1211--1237.


\bibitem{F-1}F. Garc\'ia-Ramos, \textit{A characterization of $\mu$-equicontinuity for topological dynamical systems}, Proc. Amer. Math. Soc.  \textbf{145} (2017),  no. 8, 3357--3368.



\bibitem{J-G} F. Garc\'ia-Ramos and L. Jin, \textit{Mean proximality and mean Li-Yorke chaos}, Proc. Amer. Math. Soc., \textbf{145} (2017), 2959--2969.

\bibitem{GLZ} F. Garc\'{\i}a-Ramos, J. Li and R. Zhang, \emph{When is a dynamical system mean sensitive}, accepted by
Ergodic Theory Dynam. Systems, arXiv:1708.01987, 2018.

\bibitem{GM15} F. Garc\'ia-Ramos and B. Marcus, \emph{Mean sensitive, mean
equicontinuous and almost periodic functions for dynamical systems}, arXiv:1509.05246v2, 2015.

\bibitem{GW93} E.~Glasner and B.~Weiss, \emph{Sensitive dependence on initial conditions},
Nonlinearity, \textbf{6} (1993), no. 6, 1067--1075.

\bibitem{HLTXY} W. Huang, J. Li, J. Thouvenot, L. Xu and X. Ye, \emph{Bounded complexity, mean equicontinuity and discrete spectrum}, arXiv:1806.02980, 2018.

\bibitem{HLY13} W. Huang, J. Li and X. Ye, \emph{Stable sets and mean Li-Yore chaos
in positive entropy systems}, Journal of Functional Analysis, \textbf{266} (2014), 3377--3394.

\bibitem{HLY} W. Huang, P. Lu and X. Ye, \emph{Measure-theoretical sensitivity and equicontinuity}, Israel J. of
Math., {\bf 183} (2011),  233--283.


\bibitem{HWY} W. Huang, Z. Wang and X. Ye,
\textit{Measure complexity and M\"{o}bius disjointness}, arXiv:1707.06345, 2017.

\bibitem{HXU} W. Huang and L. Xu, \textit{Special flow, weak mixing and complexity}, Preprint, 2018.

\bibitem{KL07} D. Kerr and H. Li, \emph{Independence in topological and C$^*$-dynamics},
Math. Ann., \textbf{338} (2007),  869--926.

\bibitem{L16} J. Li, \textit{Measure-theoretic sensitivity via finite partitions},
Nonlinearity, \textbf{29} (2016), 2133--2144

\bibitem{LT13} J. Li and S. Tu, \emph{On proximality with Banach density one},
J. Math.Anal.Appl., \textbf{416} (2014), 36--51.

\bibitem{LTY} J. Li, S. Tu and X. Ye, {\it Mean equicontinuity and mean sensitivity},
Ergodic Theory Dynam. Systems, {\bf 35} (2015), 2587--2612.


\bibitem{Li} J. Li, \textit{How chaotic is an almost mean equicontinuous system},
 Discrete Contin. Dyn. Syst.,{\bf 38} (2018), 4727--4744.

\bibitem{LYZ18} J. Li, T. Yu, and T. Zeng, \emph{Dynamics on sensitive and equicontinuous functions}, Topol. Methods Nonlinear Anal., {\bf 51}, (2018), 545--563.

\bibitem{M87} R. Ma\~n\'e, \textit{ Ergodic theory and differentiable dynamics},
Springer-Verlag, Berlin, 1987.

\bibitem{QZ18} J. Qiu  and J. Zhao,
\textit{A note on mean equicontinuity}, arXiv:1806.09987, 2018.


\bibitem{O52} J. C. Oxtoby, \textit{Ergodic sets}, Bull. Amer. Math. Soc., {\bf 58} (1952), 116--136.

\bibitem{S82} B. Scarpellini,
\textit{Stability properties of flows with pure point spectrum},
J. London Math. Soc. (2),  \textbf{26} (1982), no. 3, 451--464.

\bibitem{Wal} P. Walters,
\textit{An introduction to ergodic theory},  Graduate Texts in Mathematics, 79. Springer-Verlag, New York-Berlin, 1982.


\end{thebibliography}
\end{document}